\newcommand{\be}{\begin{equation}}
\newcommand{\ee}{\end{equation}}
\newcommand{\la}{\label}
\newcommand{\ba}{\begin{array}{l}}
\newcommand{\ea}{\end{array}}
\newcommand{\Rr}{{\mathbb R}}
\newcommand{\nax}{\nabla_x}
\newcommand{\pa}{\partial}
\newcommand{\fr}{\frac}
\newcommand{\na}{\nabla}
\newtheorem{thm}{Theorem}
\newtheorem{lemma}{Lemma}
\theoremstyle{definition}
\newtheorem{rem}{Remark}
\newcommand{\beg}{\begin}
\renewcommand{\div}{\mathrm{div}}
\newcommand{\p}{\lbrack p\rbrack} 
\title{Radiative Vlasov-Maxwell Equations}
\author{Peter Constantin}
\address{Department of Mathematics, Princeton University, Princeton, NJ 08544}
\email{const@math.princeton.edu}
\author{Hezekiah Grayer II}
\address{Program in Applied and Computational Mathematics, Princeton University, Princeton, NJ 08544}
\email{hgrayer@math.princeton.edu}
\date{today}
\begin{document}
\begin{abstract}
The {{R}}adiative Vlasov-Maxwell equations {{model the radiative kinetics of collisionless relativistic plasma. In them}} the Lorentz force is modified by the addition of  radiation reaction forces. The radiation forces produce damping of particle energy but these forces are not divergence-free in momentum space, which has an effect of concentration near zero momentum. We prove unconditional global regularity of solutions for a class of {R}adiative Vlasov-Maxwell equations with large initial data. 
\end{abstract}

        \keywords{Vlasov-Maxwell, radiation, global regularity}
        
        \noindent\thanks{\em{MSC Classification:  35Q70, 35Q83.}}

\maketitle
\section{Introduction}

Radiation reaction forces in plasma capture the irreversible transfer of kinetic energy into radiation as the charged particles accelerate. There are several models of this phenomenon in the physical literature \cite{landau1975classical}, \cite{rybicki1991radiative} and a formal derivation from microscopic models \cite{kuz1978bogolyubov}. These models apply to relativistic plasma often found in high-energy astrophysical systems.  Still, a rigorous self-consistent derivation of the particle dynamics and their radiation is fraught with fundamental
challenges \cite{spohn2004dynamics}.  Radiative forces are significant for particles at large velocities and are not accounted for in the classical Vlasov-Maxwell equations.  In this paper we prove the global regularity of solutions with large initial data for a class of Radiative Vlasov-Maxwell equations. We are not aware of any mathematical analysis of the RVM equations.

In contrast to RVM, the problem of global regularity for solutions of the classical Vlasov-Maxwell equations with large data has been studied extensively, but remains unsolved. The Vlasov-Maxwell equations are locally well posed \cite{asano1986local}. Small data results have been obtained  \cite{glassey1987absence}, \cite{schaeffer2004small},  in which the plasma is initially dilute, the solutions remain small and smooth, disperse and their asymptotic behavior is free (\cite{bigorgne2020sharp,bigorgne2021asymptotic,bigorgne2022global,bigorgne2023scattering}).  This picture holds for nearly neutral data as well  (\cite{glassey1988global},\cite{breton2025modified}). 
There are several recent results (\cite{han2024linear,nguyen2024new}) concerning the asymptotic behavior of small  perturbations of steady states which do not depend on the space variable. 
Existence of  global weak solutions was obtained in \cite{diperna1989global}. 

For smooth large data, the possibility of spontaneous singularity formation has been the focus of many analytical works.  In seminal papers,  Glassey and Strauss \cite{glassey1986singularity, glassey1989large}  proved that the only way singularities might arise in finite time is through  concentration of particle density at very high velocity. Specifically, they proved that if the solution-averaged Lorentz factor  $\langle \gamma \rangle$ is  uniformly bounded, then no singularities can form in finite time from smooth and localized initial data. The quantity $\langle \gamma \rangle$ is a function of space and time representing the kinetic energy density of the particles.
In \cite{klainerman2002new} it was shown using Fourier analysis that the singularities are averted if the electromagnetic fields remain bounded. Several other results are based on Fourier methods \cite{bouchut2003classical,bouchut2004nonresonant},\cite{pallard2005non}.

A number of extensions of the results of Glassey and Strauss concern moments of the type 
$M_{\theta,q} = \| \langle \gamma ^\theta \rangle \|_{L^{q}(dx)}$. In our notation, 
$$\langle \gamma^\theta \rangle = \int_{\mathbb{R}^3} (\sqrt{1 + |p|^2})^\theta f(x,p,t) \, dp$$ for an exponent $\theta$. The average  of the kinetic energy density considered by Glassey and Strauss corresponds to  $M_{1,\infty}$.
 In \cite{pallard2005boundedness}, control of $M_{\theta, q}$ where $\theta > 4/q$ and $6 \leq q \leq \infty$ is shown to be sufficient for regularity. In  \cite{sospedra2010classical}, control  of $M_{0,\infty}$ is established as a regularity criterion, and in \cite{pallard2015refined} this result was extended to $M_{0,6}$. The results of
\cite{kunze2015yet} imply that finiteness of $M_{3,2}$ is sufficient for regularity. In \cite{luk2016strichartz} it is shown that for regularity, if $2 < q \leq \infty$ and $\theta > 2/q$, then control of $M_{\theta,q}$ is sufficient, and if $1 \leq q \leq 2$ and $\theta > 8/q - 3$, then control of $M_{\theta,q}$ is sufficient, and an improvement \cite{patel2018three} shows that if $\theta > 3$, then control of  $M_{\theta,1}$ is sufficient for regularity.  In \cite{luk2014new},  it is proven that the solutions remain smooth if a plane projection of the momenta is bounded through the evolution. Results  of global regularity for cylindrical symmetry are announced in \cite{wang2022global}.

The Vlasov-Maxwell (VM) equations are formed by the Vlasov equation for the particle distribution function $f= f(x,p,t)$, coupled to the Maxwell equations for the electromagnetic (EM) fields $E = E(x,t)$ and $B = B(x,t)$. The particle dynamics is driven by the Lorentz force $$F_L  = E + v\times B.$$  The Radiative Vlasov-Maxwell (RVM) equations are the same equations, except that the particles are moved by a total force $$F= F_L + F_R$$ where $F_R$ is the radiation reaction force.  The RVM equations are not a small perturbation of the classical Vlasov-Maxwell equations. 
The main result of this paper is:
\beg{thm} \la{main}Assume that the initial data $E_0(x)$ and $B_0(x)$ for the electromagnetic fields $E(x,t)$ and $B(x,t)$ and the initial data $f_0(x,p)$ for the particle distribution function $f(x,p,t)$ are smooth, compatible, and decay at spatial infinity. In addition assume 
\[
f_0(x,0) =0
\]
(the initial particle distribution vanishes at zero momentum) and
\[
\sup_{x,p} f_0(x,p) \exp{(A_0|p|)} < \infty
\]
holds for some  $A_0> 0$ large enough  (the initial particle density decays uniformly exponentially at high momentum).
 Then, the solution of the RVM equations is globally smooth and
there exist constants $C$ depending explicitly only on the initial data so that
\[
\begin{aligned}
 |E(x,t)| + |B(x,t)| + |\na_x E(x,t)| + |\na_x &B(x,t)| \le C\exp(Ct)
\end{aligned}
\]
and
\[
f(x,p,t) + |\na_x f(x,p,t)| + \sqrt{1 + |p|^2} |\na_p f(x,p,t)| \le C\exp( C\exp (Ct))
\]
hold for all $x, p$ and $t$.
\end{thm}

In this paper we address the main problem, which is to obtain unconditional global a priori bounds for general large data. We do not strive for the most economical function spaces, nor provide a construction of solutions. The construction of solutions, asymptotic behavior for small data and analysis of related models will be discussed in forthcoming works. We chose for simplicity the single species model, but the same proof applies to the multispecies model. We also chose to emphasize unconditional results, based on precisely specified reaction forces. Physically motivated conditional results for more general forces, assuming bounds on the EM fields may also be obtained with our methods.

Some ideas of the proof and a comparison with the VM equations are given below. Unlike the VM equations, where the total force $F_L$ is divergence-free in $p$, $\div_p F_L=0$, the radiative force's divergence
\[
\div_p F_R \neq 0
\]
is mostly negative. Thus, unlike the VM case where $f$ is automatically bounded if initially so, in the RVM equations $f$ is not bounded uniformly and can (and will) grow in time. The danger is uncontrolled implosion, because the phase volume is contracting. On the other hand, the radiation reaction force causes the flux of the kinetic energy density to decay. Thus, the main danger of singularity formation in RVM, as opposed to VM, comes not from high, but from low velocity.

The radiation reaction force is used to obtain unconditional a priori bounds on the particle distribution, which blow up like $|p|^{-3}$ near the origin, but decay exponentially at large $|p|$. This is a manifestation of the damping at high momenta, and the price one pays for the negative divergence of forces. The singular bounds on the particle distribution function make it impossible to bound directly the charge density, but they imply unconditional a priori bounds 
\[
\langle |v| \lbrack p \rbrack^n\rangle (x,t) = \int_{\Rr^3} |v| \lbrack p \rbrack^n f(x,p,t)dp \le M_n
\]
(in our notation the Lorentz factor is $\gamma = \lbrack p \rbrack = \sqrt{1 + |p|^2}$, with the normalized speed of light $c=1$, the velocity is $v = p / \lbrack p \rbrack$ and $p$ is the momentum). 
These ``fluxes of moments'' bounds are not in by themselves bounds on the moments because $v$ vanishes at $p=0$, but in the next step  we deduce new ``flux of energy''-type bounds in terms of  fluxes of moments and logarithms of gradients of $f$. Here we have to use the propagation of the condition $f(x,0,t)=0$ due to the annihilation of the contribution of the electric field at zero momentum. This is the reason the charge density will turn out to be finite, albeit growing at a double exponential rate in time.
To close the bounds we now turn to the Glassey-Strauss method of representing the electromagnetic fields. Using it and the gradient-conditioned moment bounds, we obtain bounds on the EM fields in terms of a choice of logarithms of gradients of $f$, in other words, in terms of a quantity \\
 $$\min{\left \{\log_+\|\na_p f(t)\|_{L^{\infty}}, \sup_{s\le t}\log_+  \|\na_x f(s)\|_{L^{\infty}}\right\}}.$$ 
  The Glassey-Strauss representation for gradients is then used together with the EM bounds to obtain a priori estimates of the gradients of the EM fields. Finally, we apply the bounds on the EM fields and their gradients to bound the gradients of $f$, closing the argument. Ultimately, global regularity is a consequence of superlinear differential inequalities for the gradients of $f$, with doubly logarithmic nonlinearity.  The EM field bound in terms of minimum of two gradient logarithms is crucial in order to obtain global regularity: without this minimum, our bounds would not be sufficient to rule out finite time blow up.

The paper is organized as follows: After a section on notation and preliminaries (Section \ref{prel}) where we describe the RVM equations, we make specific the form of the radiation reaction force $F_R$ and  summarize its properties in Section \ref{propf}. We recall the Glassey-Strauss representation in Section \ref{gs}, and in Section \ref{vm} we derive moment bounds. In Section \ref{embounds} we obtain bounds on the EM fields and in Section \ref{gembounds} we derive bounds for their gradients. In Section \ref{gfb} we obtain the final gradient bounds on $f$ and conclude the proof of Theorem \ref{main}. In Appendix A we verify some properties of the Glassey-Strauss representation and in Appendix B we give the proofs of ODE lemmas.

\section{Preliminaries: notation, the RVM equations}\la{prel}
The radiative Vlasov-Maxwell  equations are formed with the Vlasov equation
\begin{equation}
\pa_t f + \div_x(v f) + \div_p(F f)=0,
\la{rmv}
\end{equation}
with
$f(x,p,t)\ge 0$, $(x,p,t) \in \Rr^3\times\Rr^3\times \Rr$ and 
\begin{equation}
F =  F_L + F_R 
\la{F}
\end{equation}
where $F_L$ is the Lorentz force
\begin{equation}
F_L = E + v\times B
\la{FL}
\end{equation} 
and $F_R$ is the radiation reaction force, which will be discussed in the next section (see Definition \ref{simple}).
The velocity is denoted by $v$,
\begin{equation}
v = \fr{p}{\sqrt{1+ |p|^2}} = \fr{p}{\lbrack p \rbrack},
\la{vp}
\end{equation}
and the Lorentz factor $\gamma$ by $[p]$, 
\begin{equation}
\p = \sqrt{1 + |p|^2}.
\la{pnot}
\end{equation}
$E(x,t)$ and $B(x,t)$ are respectively the electric field and the magnetic field.
They solve the Maxwell equations,
\begin{equation}
\left\{\,\,
\begin{aligned}
\pa_t E - \na_x\times B &=  - j,  \\
\div_x E &= \rho  \\ 
\pa_t B + \na_x\times E&= 0 \\
\div_x B &= 0,
\la{Maxwell}
\end{aligned}
\right.
\end{equation}
together with
\begin{equation}
\rho = \int f dp  = \langle 1\rangle \quad \text{and} \quad j = \int v fdp = \langle v \rangle.
\la{rhoj}
\end{equation}
Throughout the paper, for a function $\phi(x,p,t)$, we denote the solution average
\be
\langle \phi\rangle(x,t) = \int \phi(x,p,t)f(x,p,t)dp.
\la{langlerangle}
\ee
The RVM equations are comprised of \eqref{rmv} with \eqref{F} and \eqref{Maxwell} with \eqref{rhoj}. Smooth solutions of RVM require the following compatibility conditions to be satisfied by the initial data: $f_0\geq 0$, 
\begin{equation}
\div_x E_0 = \int f_0 \, dp \quad \text{ and }\quad \div_x B_0 = 0.
\end{equation}

\section{The radiation reaction force}\la{propf}
We write 
\begin{equation}
{\mathbf {K}}(x,t) = (E(x,t), B(x,t)) 
\la{KEB}
\end{equation}
and
\begin{equation}
K^2 =  |E|^2 + |B|^2 =  |{\mathbf{K}}|^2.
\la{[K]}
\end{equation}
\beg{defi} \la{simple}
In this paper, the \emph{radiation reaction force} is
\begin{equation*}
F_R(x,p,t) = - \chi(|p|) E(x,p,t) - Mp K(x,t) 
\end{equation*}
with $M>2$ a constant. Here $0\le \chi\le 1$ is a smooth cutoff,
\begin{equation*}
\chi (r) = 1 \;\text{for} \; r\le R_0 \;\text{and} \; \chi(r) = 0 \;\text{for}\; r\ge R_1,  |\chi' (r)|\le 2.
\la{chir}
\end{equation*}
\end{defi}
\beg{rem}\la{forces}
Some of the examples of radiation reaction forces in the physical literature include  (\cite{landau1975classical})
\[
F_{LL} = -h v \gamma^2 (|F_L|^2 - (v \cdot E)^2)
\]
and the force due to inverse Compton scattering (\cite{rybicki1991radiative})
\[
F_{IC} = -h v \gamma^2 K^2.
\]
The parameter $h > 0$ measures the relative intensity of the reaction, and is proportional to Planck's constant.  These examples grow quadratically with the EM fields and vanish at $p=0$. 
In the present work we use the  term $-\chi E$ to mitigate the effect of the electric field at $p=0$, and the linear growth of $F_R$ in the EM fields to close an a priori bound on the EM fields using a bootstrap argument. The form in Definition \ref{simple} was chosen for its simplicity, many other similar expressions, including modifications of $F_{LL}$ and $F_{IC}$  will provide the same unconditional result. Because the unmodified expressions $F_{LL}$ and $F_{IC}$  grow quadratically with the size of the EM fields, in these cases our methods provide conditional global regularity for large data, assuming that the EM fields are bounded.
\end{rem}

The  effect of the radiation reaction force as it pertains to regularity is as follows.
Writing $\widehat p = p / |p|$,  we find
\begin{equation}
\begin{aligned}
F\cdot \widehat p &= (1-\chi(|p|))E\cdot \widehat p - MK |p| \\
&\leq -K(x,t)(M|p| -(1-\chi(|p|))) \leq 0
\end{aligned}
\la{Fpineq}
\end{equation}
holds  because 
\begin{equation}
M\ge 2, \quad 1-\chi(r) \le 2r.
\la{MR1}
\end{equation}
We note that
\begin{equation}
\div_p{F_L} = 0,
\la{divpFL}
\end{equation}
however,  $\div_p F_R\neq 0$; in fact
\begin{equation}
-\div_p F = 3MK(x,t) + \chi'(|p|) E\cdot\widehat p.
\la{divpFsimple}
\end{equation}
We show in Section \ref{vm} that for large enough positive constants $A$,
\begin{equation}
\left(\fr{3}{|p|} + A\right)F\cdot\widehat p - \div_p F \le 0
\la{condA}
\end{equation}
holds. This is a key property of $F$.

Observe that
\begin{equation}
|F(x,p,t)| \le (M+2) |p| K(x,t) ,
\la{Fbound}
\end{equation}
and differentiating, we find
\begin{equation}
|\na_p F(x,p,t)|+ |\na_p \na_p F(x,p,t)| \le C(M+2) K(x,t).
\la{napFbound}
\end{equation}
Moreover,
\begin{equation}
|\na _x F(x,p,t)| \le C |p| (|\na_x E| + |\na_x B| + K(x,t)),
\la{naxFbound}
\end{equation}
and 
\begin{equation}
|\na_p\nax F (x,p,t)| \le C(|\na_x E| + |\na_x B| + K(x,t)).
\la{napnaxFbound}
\end{equation}
The properties \eqref{condA}-\eqref{napnaxFbound} are sufficient to obtain global regularity.

\section{On the Glassey-Strauss representation}\la{gs}
Differentiating the Maxwell equations results in the wave equations 
\begin{equation}
\Box E = -\pa_t j - \na_x\rho,
\la{boxe}
\end{equation}
and
\begin{equation}
\Box B = \na_x\times j.
\la{boxb}
\end{equation}
We write 
\begin{equation}
\Box^{-1} g = \int_{|x-y|\le t} \fr{1}{|x-y|} g(y, t-|x-y|)dy.
\la{boxii}
\end{equation}
We consider the the tangential derivatives $T_i$
\begin{equation}
T_i = \pa_i - \omega_i\pa_t
\la{ti}
\end{equation}
with $\omega = (y-x)/|y-x|$, which differentiate in directions parallel to the light cone, 
\begin{equation}
T_i= \fr{\pa}{\pa y_i} (g(y, t-|x-y|)),
\la{tiagain}
\end{equation}
and the derivative
\begin{equation}
V = \pa_t - \omega\cdot \na_y
\la{V}
\end{equation}
which differentiates in the running time $s$ along the light cone,
\begin{equation}
\fr{d}{ds} g(x+ (t-s)\omega, s) = (V g)(x+(t-s)\omega, s).
\la{Vagain}
\end{equation}
We note that 
\begin{equation}
\omega\cdot T + V =0.
\la{degVT}
\end{equation}
Now we note that, if $g = Lh$ where $L$ is a vector field belonging to the linear span of $T_i $ and $V$ and of $h$ is bounded, then $\Box^{-1} g$ is bounded. This is done by integration by parts, using the representation \eqref{boxii} for $Vh$ and $Th$. The linear span can be with variable coefficients depending smoothly on $\omega$.

Glassey and Strauss \cite{glassey1986singularity} represent $E$ and $B$ using the linear wave equations and expressing $\pa_t$ and $\na _y$  as linear combinations of $S$ and $T_i$  where
\begin{equation}
S= \pa_t + v\cdot\na_y
\la{S}
\end{equation}
is the streaming derivative, and where $T_i$ is the tangential derivative given in \eqref{ti}.
The linear combinations are
\begin{equation}
\pa_i = T_i + \fr{\omega_i}{ 1+ \omega \cdot v}\left (S - v\cdot T\right)
\la{paist}
\end{equation}
and
\begin{equation}
\pa_t = \fr{S - v\cdot T}{1 + \omega\cdot v}.
\la{patst}
\end{equation}

This procedure results in two sets of expressions, one coming from the
streaming derivative $S$ and one coming from the tangential derivatives
 $T_{i}$. The overall form is
 \begin{equation}
 \label{GSK}
{\mathbf {K}}(x,t) = ({\mathbf {K}}_{T} + {\mathbf {K}}_{S})(x,t) + O(1)
 \end{equation}
 where $O(1)$ represents a smooth function of $(x,t)$ which depends explicitly on the initial data. For the
 expressions coming from  $S$, we have 
  \begin{equation} 
 \label{KS}  
 \begin{aligned} 
   {\mathbf {K}}_{S}(x,t) &= \int_{|x - y| \leq t}  a_{S}(\omega, v) (Sf)(y, p, t -
 |x - y|) \,dp \frac{dy}{|x - y|} \\
              &= \int_0^{t} (t - s)\,ds \int_{|\omega| = 1} 
              a_{S}(\omega,v) (Sf)(x + (t - s) \omega, p, s) \, dp\,
              \,dS(\omega)  \\
 \end{aligned}  
 \end{equation}  
 where the kernel $a_{S} = a_{S}(\omega,v)$ is an explicit analytic tensor
 valued function satisfying
 \begin{equation}
 \label{eq:aSbd}
   |\nabla_{p} a_{S} | \leq C \lbrack p \rbrack .
 \end{equation}
 The expressions coming from $T$ are
  \begin{equation}
    \begin{aligned}
 \label{KT}
   {\mathbf {K}}_{T}(x,t) &= \int_{|x - y| \leq t}  a_{T}(\omega, v) f(y, p, t -
 |x - y|) \,dp \frac{dy}{|x - y|^2} \\
              &= \int_0^{t} ds \int_{|\omega| = 1} 
              a_{T}(\omega,v) f(x + (t - s) \omega, p, s) \, dp\,
              dS(\omega) 
 \end{aligned}  
 \end{equation}
 where the kernel $a_{T} = a_{T}(\omega,v)$ is an explicit analytic tensor valued function
 satisfying
 \begin{equation}
 \label{eq:aTbd}
   |a_{T}| \leq C\lbrack p \rbrack.
 \end{equation}

For the gradient of the field,  the representation
(\cite{glassey1986singularity} Theorem 4), which is obtained via a
similar procedure, has the form
\begin{equation}
\label{GSgradK}
\nabla_{x}{\mathbf {K}}(x,t) = ((\nabla_{x}{\mathbf {K}})_{TT}+ (\nabla_{x}{\mathbf {K}})_{TS}+
(\nabla_{x}{\mathbf {K}})_{SS})(x,t) + O(1)
\end{equation}
 where  $O(1)$ represents a smooth function of $(x,t)$ which depends explicitly on
the initial data. The terms are
\begin{align}
  (\nabla_{x}{\mathbf {K}})_{TT}(x,t) &= \int_{|x - y| \leq t } a_{TT}(\omega,v)
  f(y,p,t - |x - y|) \, dp \frac{dy}{|x - y|^{3}} \phantom{S}\\ 
  (\nabla_{x}{\mathbf {K}})_{TS}(x,t) &= \int_{|x - y| \leq t } a_{TS}(\omega,v)
  (Sf)(y,p,t - |x - y|) \, dp \frac{dy}{|x - y|^{2}} \\ 
  (\nabla_{x}{\mathbf {K}})_{SS}(x,t) &= \int_{|x - y| \leq t } a_{SS}(\omega,v)
  (S^2f)(y,p,t - |x - y|) \, dp \frac{dy}{|x - y|}. 
\end{align}
Above, the kernels $a_{TT}$, $a_{TS}$ and $a_{SS}$ are explicit tensor
valued analytic functions which satisfy various properties (see \cite{glassey1989large}
Lemma 4). In particular, their derivatives in $y$ and $p$ are bounded by powers of $\lbrack p\rbrack$.

\section{Moment bounds}\la{vm}
In this section we use the radiation reaction force to obtain bounds for moments

\begin{equation}
\label{eq:}
m_{n}(x,t) = \langle \lbrack p \rbrack^{n} \rangle = \int \lbrack p
\rbrack^{n}f(x,p,t) \, dp.
\end{equation}
The charge density $\rho$  corresponds to $m_0(x,t)$ and, as a consequence of the Vlasov equation 
\eqref{rmv}, it obeys the conservation equation
\begin{equation}
\pa_t \rho + \div_x j = 0.
\la{consrho}
\end{equation}
For higher moments, from the Vlasov equation \eqref{rmv}, we have
\begin{equation}
\label{eq:mneq}
\frac{\partial}{\partial t}m_{n} + \div_{x}\langle v \lbrack p
\rbrack^{n} \rangle = n\langle (v \cdot F) \lbrack p \rbrack^{n-1}\rangle,
\end{equation}
where we used
\begin{equation} 
v = \na_p\lbrack p \rbrack
\la{vnalogp}
\end{equation}
and integrated by parts in $\int [p]^n\div_p(Ff)dp$.
 A key element of the proof is provided by the unconditional a priori control of the fluxes $vm_n$ 
  of the moments  $m_n$,
\begin{equation}
vm_n(x,t)= \int f(x,p,t)|v|\lbrack p \rbrack^ndp = \langle |v|\lbrack p \rbrack^n\rangle 
\la{momen}
\end{equation}
in terms of  the initial data.
\begin{thm}\la{momn}
Let $(f, E, B)$ be a smooth solution of the RVM equations on $[0,T]$. 
Assume that there exists constant $C_0$ such that 
\begin{equation*}
0\le |p|^3 f_0(x,p)\exp{(A|p|)}  \le C_0
\la{iddecay}
\end{equation*}
 holds for some
\begin{equation*}
A \ge \fr{3 + 2R_0}{(M-2)(R_0)^2}.
\end{equation*} 
Then, for any $n \geq 0$
\begin{equation*}
\sup_{0\le t\le T} \|vm_n(\cdot, t)\|_{L^{\infty}} \le M_n
\la{Mkbound}
\end{equation*}
holds with constants $M_n$ depending explicitly only on $n$, $A$ and $C_0$.
\end{thm}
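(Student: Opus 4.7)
The plan is to establish a pointwise upper bound
\[
f(x,p,t) \le C_0 |p|^{-3} e^{-A|p|}
\]
on the particle density and then extract the flux moment bounds by direct integration in momentum. The weight $|p|^3 e^{A|p|}$ is tailored so that the decay hypothesis on $f_0$ reads exactly as $g_0 := |p|^3 e^{A|p|} f_0 \le C_0$, while the apparent $|p|^{-3}$ singularity in the upper bound is absorbed by the factor $|v| = |p|/[p]$ together with the three-dimensional volume element.

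First I would set $g(x,p,t) := |p|^3 e^{A|p|} f(x,p,t)$ and derive the transport equation it satisfies. Using $\nabla_p(|p|^3 e^{A|p|}) = |p|^3 e^{A|p|}(3/|p| + A) \widehat p$, the Vlasov equation \eqref{rmv} rearranges to
\[
\partial_t g + v \cdot \nabla_x g + F \cdot \nabla_p g = \left[\left(\frac{3}{|p|} + A\right) F \cdot \widehat p - \div_p F\right] g.
\]
The property \eqref{condA}, valid under the threshold on $A$ imposed in the hypothesis, says precisely that the bracketed factor is pointwise non-positive. Although the factor $3/|p|$ looks singular, it is harmless: the magnetic contribution $(v \times B) \cdot \widehat p$ vanishes identically, and since $\chi \equiv 1$ on $|p| \le R_0$ we have $F \cdot \widehat p = -M|p| K$ there, so $(3/|p|) F \cdot \widehat p = -3MK$ is bounded near the origin.

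Second, I would invoke the maximum principle along the smooth characteristic flow of $(v,F)$: the sign of the source forces $g$ to be non-increasing along characteristics, hence
\[
\sup_{0 \le t \le T} \sup_{x,p} g(x,p,t) \le \sup_{x,p} g_0(x,p) \le C_0,
\]
which rearranges to the sought pointwise bound on $f$. Then, using $|v|/|p|^3 = 1/(|p|^2 [p])$ and passing to spherical coordinates in $p$,
\[
vm_n(x,t) = \int_{\mathbb{R}^3} |v| [p]^n f\, dp \le C_0 \int_{\mathbb{R}^3} \frac{[p]^{n-1}}{|p|^2} e^{-A|p|}\, dp = 4\pi C_0 \int_0^\infty (1 + r^2)^{(n-1)/2} e^{-A r}\, dr =: M_n,
\]
where the $r^2$ from the volume element cancels the $|p|^{-2}$ singularity, the exponential factor ensures convergence, and $M_n$ depends explicitly only on $n$, $A$, and $C_0$. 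The main obstacle I anticipate is precisely the apparent singularity at $p = 0$ in the coefficient $3/|p|$; this is resolved by the structural cancellation noted above, which is built into the form of $F_R$ via the cutoff $\chi$ and the linear damping $-MpK$, after which the argument reduces to a short calculation combined with the pointwise inequality \eqref{condA}.
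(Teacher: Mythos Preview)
Your proposal is correct and follows essentially the same approach as the paper: establish the pointwise decay $f\le C_0|p|^{-3}e^{-A|p|}$ via the monotonicity of $|p|^3e^{A|p|}f$ along the flow, then integrate to get the flux moment bounds. The paper phrases the monotonicity in Lagrangian form (computing $\tfrac{d}{dt}(\Phi(r)+\log f)$ with $\Phi(r)=Ar+3\log r$ along characteristics and verifying \eqref{condA} in the process), whereas you write the equivalent Eulerian transport equation for $g=|p|^3e^{A|p|}f$; this is a cosmetic difference, not a genuinely different route.
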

Theorem \ref{momn} is a corollary of the a priori estimate:
\begin{thm}\la{fdecayb} Let $(f, E, B)$ be a smooth solution of the RVM equations on $[0,T]$. Assume that there exists a constant $C_0$ such that 
\begin{equation}
0\le |p|^3f_0(x,p)\exp{A|p|}  \le C_0
\la{iddecay}
\end{equation}
holds for some
\begin{equation}
A \ge \fr{3 + 2R_0}{(M-2)(R_0)^2}.
\la{MARcond}
\end{equation} 
 Then,
\begin{equation}
0\le f(x,p,t) \le C_0|p|^{-3}\exp({-A|p|})
\la{fpointdecay}
\end{equation}
holds for $t\le T$.
\end{thm}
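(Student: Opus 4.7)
The plan is to introduce the weight $\phi(p) = |p|^3\exp(A|p|)$ and show that $h := \phi f$ satisfies a transport equation whose zero-order coefficient is non-positive, so that $h$ is non-increasing along characteristics. Combined with the initial bound $h_0\le C_0$ supplied by \eqref{iddecay}, this gives $h\le C_0$ for all time, which is equivalent to \eqref{fpointdecay}.

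The first step is to derive the equation for $h$. Multiplying the Vlasov equation \eqref{rmv}, written as $\pa_t f + v\cdot\na_x f + F\cdot\na_p f + (\div_p F)f = 0$, by $\phi$ and using $\phi\,F\cdot\na_p f = F\cdot\na_p h - f\,F\cdot\na_p\phi$ together with $\na_p\log\phi = (3/|p| + A)\widehat p$, one obtains
\begin{equation*}
\pa_t h + v\cdot\na_x h + F\cdot\na_p h = h\,G(x,p,t), \qquad G = (3/|p| + A)(F\cdot\widehat p) - \div_p F.
\end{equation*}
The desired conclusion reduces to verifying that $G\le 0$ for $|p|>0$, which is exactly the key property \eqref{condA} announced in Section \ref{propf}.

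The main step, and expected principal obstacle, is this pointwise verification of $G\le 0$. Substituting \eqref{Fpineq} and \eqref{divpFsimple} produces
\begin{equation*}
G = -AMK|p| + \bigl[(3/|p| + A)(1-\chi(|p|)) + \chi'(|p|)\bigr](E\cdot\widehat p).
\end{equation*}
I would split into three regimes. On $\{|p|\le R_0\}$ both $1-\chi$ and $\chi'$ vanish, so $G = -AMK|p| \le 0$. On $\{|p|\ge R_1\}$ one has $\chi' = 0$ and $|E\cdot\widehat p|\le K$, reducing the inequality to $A(M|p|-1)\ge 3/|p|$, which holds for $A$ sufficiently large. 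The decisive regime is $R_0\le|p|\le R_1$: the cutoff derivative $\chi'$ contributes, and using $|\chi'|\le 2$ and $|E\cdot\widehat p|\le K$ yields $G\le K[-AM|p| + 3/|p| + A + 2]$, which is $\le 0$ provided $A(M|p|-1)\ge 3/|p| + 2$. The right-hand side is monotone decreasing in $|p|$, so its supremum over $[R_0,R_1]$ is attained at $|p| = R_0$, and the hypothesis \eqref{MARcond} is tailored to cover precisely this worst case.

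With $G\le 0$ in hand, the characteristic $t\mapsto (X(t),P(t))$ of $\pa_t + v\cdot\na_x + F\cdot\na_p$, which is globally defined and avoids $\{p=0\}$ by ODE uniqueness (since $\chi\equiv 1$ near $0$ forces $F(x,0,t)=0$, making $\{p=0\}$ invariant), gives $\frac{d}{dt}h(X(t),P(t),t) = h\,G \le 0$. Hence $h(x,p,t)\le h_0(X_0,P_0)\le C_0$ for all $t\le T$ and $p\ne 0$, yielding the upper bound in \eqref{fpointdecay}. The non-negativity $f\ge 0$ follows from the parallel characteristic identity $f(X(t),P(t),t) = f_0(X_0,P_0)\exp\bigl(-\int_0^t\div_p F\, ds\bigr)$ combined with $f_0\ge 0$.
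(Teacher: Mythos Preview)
Your approach is the same as the paper's: the paper computes $\frac{d}{dt}(\Phi(r)+\log f)$ along characteristics with $\Phi(r)=Ar+3\log r$, which is exactly your $\frac{d}{dt}\log h$. The overall structure, including the observation that $\{p=0\}$ is invariant so characteristics avoid it, and the non-negativity argument, is correct.

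There is, however, a slip in your verification of $G\le 0$ on the middle regime $R_0\le|p|\le R_1$. Bounding $1-\chi\le 1$ leads to the sufficient condition $A(MR_0-1)\ge 3/R_0+2$ at the worst point $|p|=R_0$, but \eqref{MARcond} gives only $A(M-2)R_0\ge 3/R_0+2$. These coincide only when $R_0\ge 1/2$; for smaller $R_0$ (which the paper allows) your condition is strictly stronger than \eqref{MARcond}, and if $MR_0<1$ it is impossible for any positive $A$. So the claim that \eqref{MARcond} ``is tailored to cover precisely this worst case'' is false as written. The fix, which is what the paper does, is to use $1-\chi(r)\le 2r$ (from $\chi(0)=1$ and $|\chi'|\le 2$) on the $A(1-\chi)$ piece while keeping $1-\chi\le 1$ on the $3/|p|$ piece. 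This yields $G\le K\bigl[-A(M-2)|p|+3/|p|+2\bigr]$, and on $|p|\ge R_0$ this is $\le K\bigl[-A(M-2)R_0+3/R_0+2\bigr]\le 0$ exactly by \eqref{MARcond}. The same bound then covers the regime $|p|\ge R_1$ as well, so no separate treatment is needed there.
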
 

\beg{proof}
The path map is defined by the ordinary differential equations
\begin{equation}
\left\{\,\,
\begin{aligned}
\fr{dX}{dt}(a,\pi,t) &= v(P(a,\pi, t)), \quad &&X(a, \pi, 0)= a,\\
{}\\
\fr{dP}{dt}(a,\pi, t) &= F(X(a,\pi, t), P(a, \pi, t), t), \quad &&P(a,\pi, 0)= \pi.
\end{aligned}
\right.
\la{lageq}
\end{equation}
These represent the characteristic curves of the operator
 \begin{equation}
D_t = \pa_t + v\cdot\na_x + F\cdot\na_p.
\la{dt}
\end{equation}

Note that
\begin{equation}
|X(a,\pi,t) -a| < t,
\la{xa}
\end{equation}
because $|v|<1$. This property implies that the decay of $f$ at spatial infinity is controlled for finite time, as long as $F$ is Lipschitz continuous.

We fix a single characteristic 
$X(a,\pi, t)$ and $P(a,\pi, t)$. The equation \eqref{rmv} implies
\begin{equation}
\begin{aligned}
\frac{d}{dt} f(X(a,\pi,t), P(a,\pi,t), t)  &= \\
-(\div_p F(X(a,\pi,t), &\,P(a,\pi,t), t))f(X(a,\pi,t), P(a,\pi,t), t).
\end{aligned}
\la{rmvonchar}
\end{equation}
For the purpose of economy of notation, let us write 
\begin{equation}
r(t) = |P(a,\pi, t)|,
\la{rt}
\end{equation}
for the momentum magnitude,
\begin{equation}
k(t) = K(X(a,\pi,t), t),
\la{kt}
\end{equation}
for the field strength and
\begin{equation}
f(t) =  f(X(a,\pi,t), P(a,\pi,t), t)
\la{ftadhoc}
\end{equation}
for the probability density on characteristics.
These quantities depend on initial data $a$ and $\pi$.

In view of \eqref{divpFsimple},  \eqref{rmvonchar} results in
\begin{equation}
\fr {d}{dt} \log f(t)\le  (3M + |\chi'(r(t))|)k(t).
\la{fineqk}
\end{equation}
For further economy, we suppress that $r, k, f$ are evaluated at $t$.
Using \eqref{Fpineq} we have
\begin{equation}
\begin{aligned}
\fr{dr}{dt}  &\leq - Mkr + (1-\chi(r))k\\
&\leq -k(Mr - (1-\chi(r)))\\&\leq 0
\end{aligned}
\la{dotrineq}
\end{equation} 
where we use the facts that $M\ge 2$ and $(1-\chi(r)) \le |\chi'(r)| r\le 2 r$.
Let us consider the function
\begin{equation} 
\Phi(r) = Ar + \log r^3.
\la{Phir}
\end{equation}
We have that
\begin{equation}
\begin{aligned}
\fr{d}{dt}(\Phi(r) + \log f) &= \Phi'(r) \fr{dr}{dt} + \fr{d}{dt}\log f \\
&\leq -\left(A + \fr{3}{r}\right)(Mkr - (1-\chi)k) + 3Mk + |\chi'|k\\
&\leq -A(Mkr -(1-\chi)k) + \fr{3}{r}(1-\chi)k + |\chi'|k\\
&\leq 0.
\end{aligned}
\la{philogin}
\end{equation}
The last inequality follows because $A$ is large enough \eqref{MARcond}.
Indeed, the supports of $\chi'$ and of $(1-\chi)$ are included in $r\ge R_0$, and 
\begin{equation}
Mkr-(1-\chi)k \ge k(M-2)r\ge k(M-2)R_0
\end{equation} 
there, while $k((1-\chi) 3/r+ |\chi'|) \le k(3/R_0 +2)$.  
We deduce that 
\begin{equation}
\fr{d}{dt} (r^{3} f \exp{Ar}) \le 0.
\la{ftineq}
\end{equation}
We obtained
on each characteristic
\begin{equation}
\begin{aligned}
|P(a,\pi, t)|^{3} f(X(a,&\pi,t), P(a,\pi,t), t)\\&\leq (f_0(a,\pi)|\pi|^{3}\exp{A|\pi |}) \exp{(-A|P(a,\pi, t)|)}.  
\end{aligned}
\la{fineqchar}
\end{equation}
Straightfoward from \eqref{rmvonchar} and $f_0 \geq 0$ is 
\begin{equation}
f(X(a,\pi,t), P(a,\pi,t), t) \geq 0.
\la{nnegchar}
\end{equation}
Reading \eqref{fineqchar} and \eqref{nnegchar} at $x= X(a, \pi, t)$, $p = P(a,\pi,t)$ where $(x,p,t)$ is arbitrary in view of the fact that the flow map is invertible (due to the inverse map theorem of Hadamard, see e.g.   \cite{ruzhansky2015global}) we deduce
  \eqref{fpointdecay}.
\end{proof}

We show that bounds on moment fluxes imply  bounds on  moments which depend logarithmically on gradients of $f$ in either $x$ or $p$. We define 
\begin{equation}
G_1(t) = \sup_{0 \leq s \leq t}\sup_{x,p} |\na_x f(x,p,s)| + 2,
\la{G1}
\end{equation}
and
\begin{equation}
G_2(t) = \sup_{x,p} |\na_p f(x,p,t)| + 2.
\la{G2}
\end{equation}

\begin{thm}\la{mnlogG2}
Let $(f, E, B)$ be a smooth solution of the RVM equations on $[0,T]$. Assume that \eqref{fpointdecay} holds and that  the initial data satisfies $$f_0(x,0) = 0.$$ Then,
\begin{equation*}
m_n(x,t) \le CM_n + C_n\log  G_2(t)
\la{mnlogb}
\end{equation*}
holds for $t\le T$ with a constant $C_n$ depending continuously and explicitly only on $n$ and initial data.
\end{thm}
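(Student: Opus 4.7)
The plan is to control $m_n(x,t)$ by combining two ingredients already established: the pointwise decay $f(x,p,t)\le C_0|p|^{-3}e^{-A|p|}$ from Theorem~\ref{fdecayb}, together with the propagation of the hypothesis $f_0(x,0)=0$ to $f(x,0,t)=0$, which tames the non-integrable singularity $|p|^{-3}$ at the origin. To establish the latter, observe that at $p=0$ the velocity vanishes and $F(x,0,t)=E(x,t)-\chi(0)E(x,t)-0\cdot K(x,t)=0$, since $\chi(0)=1$. Hence $(X,P)=(a,0)$ is a fixed integral curve of the characteristic system \eqref{lageq}, and uniqueness forces $X(a,0,t)=a$ and $P(a,0,t)=0$ for all $t$. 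Along this characteristic \eqref{rmvonchar} reduces to a linear homogeneous ODE for $f$, so $f_0(a,0)=0$ implies $f(a,0,t)=0$. Letting $a$ range over $\Rr^3$ yields $f(x,0,t)\equiv 0$, and the fundamental theorem of calculus in $p$ then gives the pointwise bound $f(x,p,t)\le |p|\,G_2(t)$.

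Next I would split $m_n(x,t)=I(\delta)+J(\delta)$ at a scale $\delta\in(0,1]$ to be chosen, with $I(\delta)$ the integral over $|p|\le\delta$ and $J(\delta)$ the integral over $|p|>\delta$. For $I(\delta)$, the linear vanishing $f\le|p|G_2(t)$ combined with $[p]\le\sqrt{2}$ gives, after passing to spherical coordinates, $I(\delta)\le C_n G_2(t)\delta^4$. For $J(\delta)$, the pointwise decay of Theorem~\ref{fdecayb} yields
\[
J(\delta)\le 4\pi C_0\int_\delta^\infty (1+r^2)^{n/2}\,r^{-1}\,e^{-Ar}\,dr,
\]
which, by splitting the radial integral at $r=1$, is bounded by $C_n\log(1/\delta)+C_n'$: the logarithm captures the near-origin $1/r$ singularity, while the constant absorbs the exponentially decaying tail.

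Choosing $\delta=G_2(t)^{-1/4}$ (admissible since $G_2\ge 2$) balances the two contributions: $G_2(t)\delta^4=1$ and $\log(1/\delta)=\tfrac{1}{4}\log G_2(t)$, which together give the desired estimate $m_n(x,t)\le C_n(1+\log G_2(t))$, matching the stated form $CM_n+C_n\log G_2(t)$ after absorption of constants depending only on $n$ and the initial data. The main point rather than an obstacle is the propagation of $f(x,0,t)=0$ in the first paragraph: the $|p|^{-3}$ factor in the pointwise decay is not locally integrable at the origin, so without the linear vanishing coming from $f(x,0,t)=0$ the outer integral $J(\delta)$ would diverge as $\delta\to 0$ and only a polynomial-in-$G_2$ bound would be reachable from the flux control alone. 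This linear vanishing itself hinges on the specific design of $F_R$ in Definition~\ref{simple}, where the cutoff term $-\chi E$ is engineered so that $F(x,0,t)$ is identically zero.
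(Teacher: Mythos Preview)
Your proof is correct and follows essentially the same approach as the paper: propagate $f(x,0,t)=0$ via the vanishing of $v$ and $F$ at $p=0$, split the $p$-integral at a small radius, use the gradient bound $f\le |p|G_2$ near the origin and the pointwise decay \eqref{fpointdecay} away from it, then optimize. The only minor difference is that the paper first proves the bound for $m_0=\rho$ and then reduces higher moments via $m_n\le \sqrt{2}\,vm_n + (\sqrt{2})^n\rho$, invoking the flux bounds $M_n$ of Theorem~\ref{momn} (which is why $M_n$ appears explicitly in the statement), whereas you carry out the splitting directly for $m_n$ and never use $M_n$; your route is slightly more self-contained, while the paper's makes the dependence on the flux bounds explicit.
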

\beg{proof}
We note first that $f(x,0,t) =0$ holds as long as the solution is smooth (because both $v$ and $F$ vanish at $p=0$). Then, we write
\begin{equation}
\int f(x,p,t)\,dp = \int_{|p|\le R} (f(x,p,t) - f(x,0,t)) \,dp + \int_{|p|\ge 
R}f(x,p,t)\,dp.
\la{splitf1}
\end{equation}
Using \eqref{fpointdecay} 
which implies that $\int_{|p|\ge R}f(x,p,t)dp\le C_0\log{\fr{1}{R}} + \fr{C_0}{A}$, and optimizing in $R$ we obtain
\begin{equation}
\rho (x,t) \le C\log  G_2(t).
\la{rhofty}
\end{equation}
We have proved the claim for $m_0 = \rho$.
For higher moments, we observe
\begin{equation}
\begin{aligned}
m_n (x,t) &\leq \sqrt 2 (vm_{n}(x,t)) + (\sqrt 2)^n\int_{|p|\le 1} f(x,p,t)dp\\
 &\leq \sqrt 2(vm_n(x,t)) + (\sqrt 2)^n\rho(x,t).
\end{aligned}
\la{mnf1}
\end{equation}
So, the bound on $m_0$ implies bounds on all higher moments, in view of Theorem \ref{momn}.
\end{proof}

We estimate in terms of $G_1$  the space-time average of $m_n$,
\begin{equation}
\overline{m}_n(x,t) = \frac{1}{ 4 \pi t }\int_{0}^{t}\int_{|\omega| = 1} m_n(x + (t - s)\omega,s) \, dS(\omega) ds.
\end{equation}
Let  us denote the region
\begin{equation}
\Gamma(x,t) = \{ (y,s) : 0 \leq s \leq t, |x - y| \leq t - s\}.
\end{equation}
Fixing $n$ and the vertex $(x,t)$, we consider the quantity
  \begin{equation}
  \label{eq:}
  Q(s) = \int\limits_{|x - y| \leq t - s} m_n(y,s) \, \frac{dy}{|x -
  y|^{2}} 
  \end{equation}
and take the time derivative.  Differentiating, we find
  \begin{equation}
  \label{eq:dGds}
  \frac{dQ}{ds} = - \frac{1}{(t - s)^{2}} \int\limits_{|x - y| = t
  - s} m_{n}(y,s) \,dS(y)
  + \int\limits_{|x - y| \leq t - s} \frac{\partial
m_n}{\partial s}(y,s)  \frac{dy}{|x - y|^{2}}.
  \end{equation}
  Then by the moment evolution law \eqref{eq:mneq} and the property \eqref{Fbound} of $F$
\begin{equation}
\label{eq:consg}
\begin{aligned}
   \int\limits_{|x - y| \leq t - s} &\frac{\partial
m_n}{\partial s}(y,s)  \frac{dy}{|x - y|^{2}} \\&=
\int\limits_{|x - y|
\leq t - s} n \langle (v \cdot F) \lbrack p \rbrack^{n-1} \rangle(y,s) - \div_{y} \langle v\lbrack p\rbrack^n \rangle(y,s) \frac{dy}{|x - y|^{2}}
\\&\leq
\int\limits_{|x - y|
\leq t - s} C_n K(y,s) - \div_{y} \langle v\lbrack p\rbrack^n \rangle(y,s) \frac{dy}{|x - y|^{2}}
\end{aligned}
\end{equation}
where $C_n$ depends only on $n$ and the a priori moment flux bound $M_{n}$ in Theorem \ref{momn}.
Then, integrating by parts
\begin{equation}
\label{eq:divvmineq}
\begin{aligned}
  -&\int\limits_{|x - y|
\leq t - s} \div_{y} \langle v\lbrack p\rbrack^n \rangle(y,s) \frac{dy}{|x - y|^{2}} \\
    =& -\frac{1}{(t - s)^{2}} \int\limits_{|x - y| = t - s} \omega \cdot
\langle v\lbrack p\rbrack^n \rangle(y,s)\, dS(y)  \\
&+ P.V. \int\limits_{|x - y| \leq t - s} \frac{2}{|y -
  x|^{3}}\omega \cdot \langle v\lbrack p\rbrack^n \rangle(y,s) \, dy  \\
        &+ \lim\limits_{\varepsilon \to
0}  \,\langle v\lbrack p\rbrack^n \rangle(x,s) \cdot \int\limits_{|\omega| = 1} \omega \,
dS(\omega).
\end{aligned}
\end{equation}
We observe that on the right hand side of the equality above, the first term is bounded by $M_n$, and the last term  vanishes. 
Then integrating
the equation \eqref{eq:dGds} with respect to $ds$ with \eqref{eq:divvmineq} and \eqref{eq:consg} in hand yields upon dividing by $4\pi t$
 \begin{equation}
\label{eq:mnpv}
\begin{aligned}
  &\overline{m}_n(x,t) \\
  &\leq C_n + C_n \frac{1}{t}\int_0^t \| {\mathbf{K}}(s) \|_{L^{\infty}} ds + \frac{1}{2\pi t} P.V. \int_{\Gamma(x,t)}\frac{1}{|x - y|^3} \omega \cdot \langle v \lbrack p \rbrack^n \rangle \, dy ds.
\end{aligned}
\end{equation}
Here, $t>0$ and $C_n$ depends only on $n$ and the initial data. Indeed, $(4\pi t)^{-1}Q(0)$ is bounded uniformly in $(x,t)$ for smooth data.

\begin{thm}\la{Qnbound} Let $(f, E, B)$ be a smooth solution of  the RVM equations on $[0,T]$. Assume that the initial data $f_0(x,p)$ obeys $f_0(x,0) =0$  and the decay condition \eqref{iddecay}.
 Then
 \begin{equation*}
\overline{m}_n(x,t) \leq  C_n \frac{1}{t }\int_0^t K_\infty(s) ds + C_T(1 + \log  G_1(t))
 \la{qnlogb}
 \end{equation*}
 and
  \begin{equation*}
\overline{m}_n(x,t) \le C_n(1 + \log  G_2(t))
 \la{qnlog2}
 \end{equation*}
holds for $t\le T$ with constant $C_n$ depending continuously and explicitly only on $n$ and  initial data and $C_T$ depending on $n$, initial data and $T$.

\end{thm}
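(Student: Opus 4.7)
The second inequality is immediate. The quantity $\overline{m}_n(x,t)$ is the average of $m_n(y,s)$ over the surface of the backward truncated light cone from $(x,t)$, so $\overline{m}_n(x,t) \le \sup_{y,\, 0 \le s \le t} m_n(y,s)$, and Theorem~\ref{mnlogG2} bounds this supremum by $CM_n + C_n \log G_2(t)$, giving the claim.

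For the first inequality I would start from the estimate \eqref{eq:mnpv} already in hand. Its first two terms match the desired form exactly, so it remains to bound the principal-value piece
\[
\mathcal{J}(x,t) := \frac{1}{2\pi t}\, P.V.\! \int_{\Gamma(x,t)} \frac{\omega \cdot g(y,s)}{|x-y|^3}\, dy\, ds, \qquad g(y,s) := \langle v \p^n\rangle(y,s),
\]
by $C_T(1 + \log G_1(t))$. The kernel $\omega/|x-y|^3$ is a Calderon--Zygmund kernel of vanishing angular mean on $\Rr^3$, so the standard cancellation trick (subtracting $g(x,s)$ from $g(y,s)$ inside the $P.V.$) reduces the estimate to controlling a modulus of continuity of $g$ in $y$, combined with the uniform bound $\|g(\cdot,s)\|_{L^\infty} \le M_n$ from Theorem~\ref{momn}.

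The central technical step would be a log-Lipschitz estimate for $g$. Writing $g(y,s) - g(y',s) = \int v \p^n \bigl(f(y,p,s) - f(y',p,s)\bigr)\, dp$ and splitting the $p$-integral at $|p|=R$, on $|p|\le R$ the bound $|f(y,p,s)-f(y',p,s)| \le G_1(t)|y-y'|$ produces a contribution at most $C_n G_1(t)|y-y'|(1+R)^{n+3}$, while on $|p|> R$ the $G_1$-independent pointwise decay \eqref{fpointdecay} from Theorem~\ref{fdecayb} produces a contribution at most $C_n R^{n-1} e^{-AR}$. Balancing with $R \sim A^{-1}\log(1/(G_1(t)|y-y'|))$ yields
\[
|g(y,s) - g(y',s)| \le C_n\, G_1(t)\, |y-y'| \left(\log \frac{c_n}{G_1(t)\,|y-y'|}\right)^{n+3}
\]
whenever $G_1(t)|y-y'| \le c_n$; for larger separations the trivial bound $|g(y,s) - g(y',s)| \le 2 M_n$ takes over.

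Finally I would plug this estimate into the inner $y$-integral and split at radius $\delta = c_n/G_1(t)$. On $|x-y|\le\delta$ the near-region integrand is at most $C_n G_1(t)\log^{n+3}(1/(G_1(t)r))$; the substitution $u=G_1(t)r$ converts this into $C_n\int_0^{c_n}\log^{n+3}(1/u)\,du \le C_n$, a constant. On $|x-y|\ge\delta$ the uniform bound $\|g(\cdot,s)\|_\infty \le M_n$ gives at most $2M_n\log((t-s)/\delta) \le C M_n(\log T + \log G_1(t) + |\log c_n|)$. (If $G_1(t)(t-s)<c_n$ no splitting is required and the near-region estimate alone yields a constant.) Integrating in $s$ and dividing by $2\pi t$ gives $|\mathcal J(x,t)| \le C_T(1 + \log G_1(t))$, as desired. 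The main obstacle is the log-Lipschitz step; its success crucially rests on the fact that the tail decay in Theorem~\ref{fdecayb} is established without any assumption on the EM fields or on $G_1$, so that the optimization in $R$ costs only a single logarithm.
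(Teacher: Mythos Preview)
Your argument is correct and follows the same route as the paper: both start from \eqref{eq:mnpv}, exploit the vanishing angular mean of the odd kernel to subtract the center value, split the $p$-integral into a near part controlled by $|\nabla_x f|$ and a far part controlled by a priori decay, and collect the single $\log G_1$ from the outer spatial annulus. The only cosmetic differences are that the paper works directly in the double integral with a power-law $p$-cutoff $|p|=|x-y|^{-\kappa}$ and uses the higher moment-flux bound $vm_{n+k}$ for the tail, whereas you first package the inner work as a log-Lipschitz estimate on $g$ and invoke the exponential pointwise decay \eqref{fpointdecay} instead.
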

\begin{proof}
The bound for $\overline{m}_n$ in terms of $G_2$ is an immediate consequence of Theorem \ref{mnlogG2}.

To show the bound for $\overline{m}_n$ in terms of $G_1$, we estimate the principal value integral in \eqref{eq:mnpv} as follows. For fixed $s$, we split the spatial integral into the regions $|x - y| \leq \delta$ and $\delta \leq | x - y| \leq t - s$. The value $\delta = \delta(s)$ is chosen below.

The integral on $\delta \leq | x - y| \leq t - s$ is bounded by
\begin{equation}
\left|\int_{\delta \leq |x - y|\leq t - s} \frac{1}{|x - y|^3} \omega \cdot \langle v \lbrack p \rbrack \rangle (s) \, dy\right|\leq C M_n \log \left( \frac{t - s}{\delta}\right).
\end{equation}
For $|x - y| \leq \delta$ and $|p| \geq | x- y|^{-\kappa}$, we evaluate
\begin{equation}
\int_{|p|\ge |x-y|^{-\kappa}} |v|\lbrack p \rbrack^n f(y,p,s)dp \le |x-y|^{k\kappa} vm_{n+k}(y,s)
\la{largepvmn}
\end{equation}
and thus the contribution of this term is bounded,
\begin{equation}
\begin{aligned}
\left|\int_{ |x - y|\leq \delta} \frac{1}{|x - y|^3} \omega \cdot \int_{|p| \geq |x - y|^{-\kappa}} v \lbrack p \rbrack^n f (y,p,s)\,dp \, dy\right|\\ \leq CM_{n+k} \int_{|x - y|\leq \delta} |x - y|^{k\kappa - 3} \,&dy \\
\leq CM_{n + k}&\delta^{k\gamma}.
\end{aligned}
\end{equation}
We are left with the integral for $|x-y| \le \delta$ and $|p|\le |x-y|^{-\kappa}$. Because the unit sphere average 
$\int_{|\omega| =1}(\omega\cdot v) f( x, p, s)dS(\omega)$ vanishes, we have
\begin{equation}
\begin{aligned}
\left|\int_{ |x - y|\leq \delta} \frac{1}{|x - y|^3} \omega \cdot \int_{|p| \leq |x - y|^{-\kappa}} v \lbrack p \rbrack^n f (y,p,s)\,dp \, dy\right|\\ \leq C\sup_{y,p}|\nabla_y f(s)| \int_{|x - y| \leq \delta}\frac{dy}{|x - y|^2}& \int_{|p| \leq |x - y|^{-\kappa}} \lbrack p \rbrack^n \,dp \\
\leq C\sup_{y,p}&|\nabla_y f(s)| \delta^{1 - (n+3)\kappa}.
\end{aligned}
\end{equation}
By choosing $0<\kappa < \frac{1}{n + 3}$ and $\delta = (t - s)/(2 + \sup_{y,p}|\nabla_y f (s)|)$, we find that the time average of the principal value integral is bounded as
\begin{equation}
\begin{aligned}
\left| \frac{1}{2\pi t}\int_0^{t}P.V.\int_{ |x - y|\leq t - s} \frac{1}{|x - y|^3} \omega \cdot \langle v \lbrack p \rbrack \rangle (s) \, dy\,ds\right|
\leq C_T(1 + \log  G_1(t)).
\end{aligned}
\end{equation}
With this estimate and inequality \eqref{eq:mnpv}, we have shown the bound in terms of $G_1$.
\end{proof}

 \section{Electromagnetic field bounds}\la{embounds}
\begin{thm}\la{Kbound} Let $(f, E, B)$ be a smooth solution of the RVM equations on $[0,T]$. Assume that the initial data $f_0(x,p)$  obeys $f_0(x,0) =0$  and the decay condition \eqref{iddecay}.  Let 
\begin{equation*}
K_{\infty}(t) = \sup_{0\le s\le t} \|{\mathbf {K}}(\cdot, s)\|_{L^{\infty}}. 
\la{Kinfty}
\end{equation*}
Then
\begin{equation*}
 K_{\infty}(t)  \le C_1 \left (1 + \min\{ \log  G_1(t), \log  G_2(t)\}\right)
 \la{KftyG1}
 \end{equation*}
holds for $t\le T$ with a constant $C_1$ depending continuously and explicitly only on initial data and $T$.
\end{thm}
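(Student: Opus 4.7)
The plan is to apply the Glassey-Strauss representation \eqref{GSK}, $\mathbf{K}(x,t) = \mathbf{K}_T + \mathbf{K}_S + O(1)$, bound each piece using the moment estimates of Theorems \ref{Qnbound} and \ref{mnlogG2}, and close a Volterra-type inequality. For $\mathbf{K}_T$ the starting point is $|a_T|\le C[p]$ from \eqref{eq:aTbd}: after switching to polar coordinates around $x$, the volume element $r^2\,dr\,dS(\omega)$ cancels the $|x-y|^{-2}$ singularity, giving
\[
|\mathbf{K}_T(x,t)| \le C\int_0^t\int_{|\omega|=1} m_1(x+(t-s)\omega,s)\,dS(\omega)\,ds = 4\pi C\,t\,\overline{m}_1(x,t).
\]

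For $\mathbf{K}_S$ the derivative $Sf=\partial_t f+v\cdot\nabla_x f$ is not directly controlled; the Vlasov equation \eqref{rmv} however yields $Sf=-\div_p(Ff)$, so integrating by parts in $p$,
\[
\int a_S\,Sf\,dp = \int \nabla_p a_S\cdot F\,f\,dp.
\]
A naive bound using $|\nabla_p a_S|\le C[p]$ from \eqref{eq:aSbd} and $|F|\le(M+2)[p]K$ from \eqref{Fbound} would yield $CKm_2$; this can be sharpened to $CKm_1$ by the cancellation $p\cdot\nabla_p a_S=(p\cdot\partial_v a_S)/[p]^3$, which is bounded since $a_S$ depends smoothly on $v=p/[p]$ and neutralizes the radiation piece $-MpK$ of $F$. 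Either way,
\[
|\mathbf{K}_S(x,t)| \le C\int_0^t(t-s)\int_{|\omega|=1} K(y,s)\,m_n(y,s)\,dS(\omega)\,ds, \qquad y=x+(t-s)\omega,
\]
with $n\in\{1,2\}$.

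The closure is performed twice. With the $G_2$-alternative of Theorem \ref{Qnbound} and the pointwise bound $m_n\le C(1+\log G_2(t))$ from Theorem \ref{mnlogG2}, using $K(y,s)\le K_\infty(s)$, we arrive at the Volterra inequality
\[
K_\infty(t)\le A_2(t) + B_2(t)\int_0^t (t-s)\,K_\infty(s)\,ds,
\]
with $A_2,B_2\le C_T(1+\log G_2(t))$; Gronwall for the $L^1$ kernel $(t-s)$ on $[0,T]$ closes this to $K_\infty(t)\le C_T(1+\log G_2(t))$. With the $G_1$-alternative, the bound for $\overline{m}_1$ itself carries an extra $\tfrac{1}{t}\int_0^t K_\infty$ term, producing a linear Volterra inequality $K_\infty(t)\le A_1(t)+C\int_0^t K_\infty(s)\,ds$ with $A_1\le C_T t(1+\log G_1(t))$, closed by standard Gronwall to $K_\infty(t)\le C_T(1+\log G_1(t))$. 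Taking the minimum of the two bounds concludes.

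The main technical obstacle is the Volterra closure in the $G_2$ case: since $B_2(t)$ itself grows with $\log G_2(t)$, a crude Picard iteration of the Volterra equation produces $\cosh(\sqrt{B_2}\,t)$-type growth; preserving the linear-in-$\log G_2$ dependence requires a careful iteration exploiting the fixed time horizon $T$ and the fact that $A_2$ and $B_2$ share the same logarithmic order in $G_2$.
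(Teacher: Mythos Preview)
Your treatment of $\mathbf{K}_T$ is fine and matches the paper. The gap is in $\mathbf{K}_S$. You quote \eqref{Fbound} as $|F|\le(M+2)[p]K$, but the actual bound is $|F|\le(M+2)\,|p|\,K$ (note $|p|$, not $[p]$). This distinction is exactly what makes the argument close: combining with $|\nabla_p a_S|\le C[p]$ from \eqref{eq:aSbd} and the identity $|p|\,[p]=|v|\,[p]^2$, the paper obtains
\[
\left|\int a_S\,Sf\,dp\right|\le C\int [p]\,|p|\,K\,f\,dp = CK\int |v|\,[p]^2 f\,dp = CK\cdot vm_2(y,s)\le C M_2\,K(y,s),
\]
where $vm_2\le M_2$ is the \emph{moment-flux} bound of Theorem~\ref{momn}, a constant depending only on initial data. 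Hence $|\mathbf{K}_S|\le CM_2 T\int_0^t\|\mathbf{K}(s)\|_{L^\infty}\,ds$, the Gr\"onwall coefficient is a fixed constant, and the closure is linear in $\log G_i$ in both cases.

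Your route instead produces a Volterra coefficient $B_2\sim\log G_2$, and the inequality $K_\infty(t)\le L+L\int_0^t(t-s)K_\infty(s)\,ds$ with $L=C_T(1+\log G_2)$ yields $K_\infty\le L\cosh(\sqrt{L}\,t)$, which grows faster than any power of $\log G_2$. No iteration scheme recovers linear dependence here; the obstacle you flag is not a technicality but fatal as stated. Your proposed sharpening via $p\cdot\nabla_p a_S$ does not rescue this either: $a_S(\omega,v)$ is singular as $v\to-\omega$, so $\partial_v a_S$ is not uniformly bounded, and in any event the best you obtain is $CK\,m_1$, still a moment rather than a moment flux. The same defect contaminates your $G_1$ closure, since the $\mathbf{K}_S$ contribution carries a factor $m_n$ that is not controlled by $G_1$ alone. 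The fix is the one-line algebra $|p|\,[p]=|v|\,[p]^2$, which converts the moment to a moment flux bounded a priori by a constant.
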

  \begin{proof}
    We use the Glassey-Strauss representation \eqref{GSK} for ${\mathbf {K}}$ and
    bound the integrals ${\mathbf {K}}_{S}$ and ${\mathbf {K}}_{T}$. 

    To bound the integral ${\mathbf {K}}_{S}$ with kernel $a_{S}$, we first use 
    the Vlasov equation \eqref{rmv},
    $Sf = -\div_{p}(Ff)$, to
    integrate by parts in $p$, so
    \begin{equation}
    \label{eq:}
    \int a_S Sf \, dp = \int (\nabla_{p}a_{S}) F f \, dp
    \end{equation}
    pointwise in $(y,s)$.
  Then, properties \eqref{Fbound} and \eqref{eq:aSbd} imply
    \begin{equation}
    \label{eq:}
      \begin{aligned}
        \left| \int a_{S} S f \, dp \right| &\leq C \int \lbrack p \rbrack
        |p||{\mathbf{K}}| f\, dp \\
                                            &\leq C M_{2} \| {\mathbf {K}}(s)\|_{L^{\infty}}
      \end{aligned}
    \end{equation}
    because $ |p| \lbrack p \rbrack = |v| \lbrack p \rbrack^2$. 
    Therefore, ${\mathbf {K}}_{S}$ has the bound
    \begin{equation}
    \label{eq:Izerobound}
    \begin{aligned}
      \left| \int_{|x -y| \leq t}\  a_{S} Sf \,
      dp\frac{dy}{|x - y|} \right| &\leq CM_2
      \int_0^{t} (t - s) \| {\mathbf {K}}(s)\|_{L^{\infty}} \, ds \\
                                                             &\leq
                                                             CM_{2}T
                                                             \int_0^{t}\|
                                                             {\mathbf {K}}(s)\|_{L^{\infty}}
                                                             \, ds .
    \end{aligned}
    \end{equation}

    To bound the integral ${\mathbf {K}}_{T}$ with kernel $a_{T}$, we use Theorem
    \ref{Qnbound} because
    $\langle a_T \rangle$ does not generally have a pointwise bound by a  moment
    flux. In
    particular, property \eqref{eq:aTbd} implies
    \begin{equation}
    \label{eq:}
    \begin{aligned}
      \left|   \int_{{|x - y|} \leq t}  a_T f dp\frac{dy}{|x - y|^2}  \right|   \leq C
      T\,\overline{m}_{1}(x,t)
    \end{aligned}
    \end{equation}
    pointwise in $(x,t)$ and then we apply Theorem \ref{Qnbound} for $n =
    1$ using  the bound in terms of either
    $G_1$ or $G_2$. Using the bound in terms of $G_2$,
    we obtain
    \begin{equation}
    \label{eq:Ionebounda}
      \left|   \int_{{|x - y|} \leq t}  a_T f dp \frac{dy}{|x - y|^2}  \right| \leq C_1 M_1 T \log G_2(t).
    \end{equation}
    On the other  hand, from  the bound in terms of $G_1$ we have
    \begin{equation}
    \label{eq:Ioneboundb}
    \left|   \int_{|x - y| \leq t}  a_T f dp \frac{dy}{|x - y|^2}  \right| \leq C_1
    \int_0^{t} \| {\mathbf {K}}(s)\|_{L^{\infty}}\,ds + C_1 \log  G_1(t).
    \end{equation}
    
    To conclude, we apply the estimate \eqref{eq:Izerobound} for ${\mathbf {K}}_{S}$
    with either estimate
    \eqref{eq:Ionebounda}
    or \eqref{eq:Ioneboundb} for ${\mathbf {K}}_{T}$ in the Glassey-Strauss
    representation, and use the Gr{\"o}nwall inequality.
  \end{proof}

\section{Gradient bounds for electromagnetic  fields}\la{gembounds}
 Now that we know the bounds for the moments in Theorem \ref{momn}  and  the uniform $L^{\infty}$ bound on ${\mathbf {K}}$ in Theorem \ref{Kbound}, we can use the Glassey-Strauss representations \eqref{GSgradK} for the spatial gradients of $E$ and $B$ which we denote  by $\na_x {\mathbf {K}}$.

 \begin{thm}\la{thm:naxKbound} Let $(f, E, B)$ be a smooth solution of the RVM equations on $[0,T]$. Assume that the initial data $f_0(x,p)$  obeys $f_0(x,0) = 0$  and the decay condition \eqref{iddecay}. 
 Then
\begin{equation*}
\|\na_x {\mathbf {K}} (\cdot, t)\|_{L^{\infty}} \le C_1\log  G_1(t)\log  G_2(t)
\la{naxKbound}
\end{equation*}
holds for $t\le T$ with a constant $C_1$ depending continuously and explicitly only on initial data and $T$.
\end{thm}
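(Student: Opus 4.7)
The plan is to invoke the Glassey--Strauss representation \eqref{GSgradK} of $\nabla_x\mathbf{K}(x,t)$ as the sum of three integrals $(\nabla_x\mathbf{K})_{TT}$, $(\nabla_x\mathbf{K})_{TS}$, $(\nabla_x\mathbf{K})_{SS}$ (modulo a smooth $O(1)$ contribution determined by the initial data), and to estimate each one separately using the moment-flux bounds of Theorem \ref{momn}, the averaged-moment bounds of Theorem \ref{Qnbound}, and the uniform bound of Theorem \ref{Kbound}.

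For $(\nabla_x\mathbf{K})_{TS}$ and $(\nabla_x\mathbf{K})_{SS}$ I would repeatedly use the Vlasov identity $Sf=-\mathrm{div}_p(Ff)$ to trade each streaming derivative for a $p$-divergence, and integrate by parts in $p$ against the analytic kernels $a_{TS}$ and $a_{SS}$, whose $p$-derivatives grow only polynomially in $[p]$. The resulting integrands are controlled pointwise by $F$, $\nabla_pF$, $\nabla_xF$, and $\nabla_p\nabla_xF$ using \eqref{Fbound}--\eqref{napnaxFbound}. Two features are important: first, $S^2f$ requires a second substitution, and commuting $S$ past $\mathrm{div}_p$ produces commutator terms of the form $(\nabla_pv)\cdot\nabla_x(Ff)$ involving $\nabla_xF$ itself (hence $\nabla_x\mathbf{K}$); second, the kernel weights $|x-y|^{-2}$ and $|x-y|^{-1}$ are mild enough that the $y$-integration reduces to a time-averaged moment $\overline{m}_n$ multiplied by either $K_\infty(s)$ or $\|\nabla_x\mathbf{K}(\cdot,s)\|_{L^\infty}$. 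The $\nabla_x\mathbf{K}$-dependent contributions are absorbed by Gr\"onwall's inequality at the end.

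The main obstacle is the tangential-tangential term $(\nabla_x\mathbf{K})_{TT}$, which carries the singular weight $|x-y|^{-3}$. Here I would adapt the principal-value splitting from the proof of Theorem \ref{Qnbound}: cut the spatial integral at $|x-y|=\delta$ and cut the momentum integral at $|p|=|x-y|^{-\kappa}$ for a small $\kappa>0$ chosen according to the polynomial growth of $a_{TT}$ in $[p]$. The region $|x-y|\ge\delta$ is bounded, after extracting an $\omega$-average, by the moment flux $M_n$ from Theorem \ref{momn} and a $\log(t/\delta)$ factor. In the region $|x-y|\le\delta$, the high-$|p|$ part is absorbed by a higher moment flux $M_{n+k}$, while the low-$|p|$ part uses the vanishing of $\int_{|\omega|=1}a_{TT}(\omega,v)f(x,p,s)\,dS(\omega)$ at $y=x$ combined with a Lipschitz estimate of $f$ in $y$. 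Optimizing $\delta$ produces the factor $\log G_1(t)$ together with a factor $\overline{m}_n$, which by Theorem \ref{Qnbound} is bounded by $\log G_2(t)$.

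Collecting the three bounds yields a linear Gr\"onwall inequality for $\|\nabla_x\mathbf{K}(\cdot,t)\|_{L^\infty}$ whose forcing is a sum of products of elements of $\{K_\infty(s),\log G_1(s),\log G_2(s)\}$. Using Theorem \ref{Kbound} to estimate one surviving $K_\infty$ by $\log G_1$ and another by $\log G_2$ gives the required right-hand side $C_1\log G_1(t)\log G_2(t)$. The most delicate step, and the one I expect to demand the greatest care, is the bookkeeping inside $(\nabla_x\mathbf{K})_{TT}$: one must arrange the $p$-cutoff $\kappa$ and the spatial cutoff $\delta$ so that the Lipschitz-in-$y$ loss provides exactly one $\log G_1$ factor and the moment bound provides exactly one $\log G_2$ factor, with no spurious extra powers of either quantity.
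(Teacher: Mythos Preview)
Your overall strategy matches the paper's, but there is a genuine gap in your handling of $(\nabla_x\mathbf{K})_{SS}$. The commutator $(\nabla_pv):\nabla_x(Ff)$ is \emph{not} controllable pointwise by $F,\nabla_pF,\nabla_xF,\nabla_p\nabla_xF$ as you assert: expanding $\nabla_x(Ff)=(\nabla_xF)f+F\,\nabla_xf$ produces a term carrying $\nabla_xf$ itself, and no integration by parts in $p$ removes it. Bounding it by $\|\nabla_xf\|_{L^\infty}$ would give a factor of $G_1$, not $\log G_1$, and destroy the estimate. The paper's remedy is to decompose the spatial derivative once more as $\partial_i=T_i+\frac{\omega_i}{1+v\cdot\omega}(S-v\cdot T)$. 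The $T$-pieces of $\partial_i(F_jf)$ are total $y$-derivatives on the light cone and are integrated by parts \emph{in $y$}, converting the weight $|x-y|^{-1}$ to $|x-y|^{-2}$ while leaving the undifferentiated $Ff$ (then bounded via \eqref{Fbound} and the flux $M_5$). The residual $S$-piece $b^jS(F_jf)=b^j(SF_j)f+b^jF_j(Sf)$ is handled by one more application of Vlasov (for $Sf$) and of the Maxwell identities \eqref{eq:SF} (for $SF$); the latter, together with the separate term $-\div_p(fSF)$, is where the Gr\"onwall-absorbed $\int_0^t\|\nabla_x\mathbf{K}(s)\|_{L^\infty}\,ds$ actually originates.

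There is also a smaller slip in your $(\nabla_x\mathbf{K})_{TT}$ bookkeeping. The far piece $|x-y|\ge\delta$ (equivalently $0\le s\le t-\delta$) is \emph{not} bounded by a moment flux $M_n$ alone, because $|a_{TT}|\le C[p]^3$ integrates in $p$ to the moment $m_3$, not the flux $vm_3$; Theorem~\ref{momn} does not apply directly. What is needed there is the pointwise bound $m_3\le C(M_3+\log G_2)$ of Theorem~\ref{mnlogG2}, and this is precisely where the factor $\log G_2$ enters, multiplying the $\log(t/\delta)\sim\log G_1$ produced by the $\delta$-optimization. The averaged moment $\overline m_n$ of Theorem~\ref{Qnbound} is not the right object for this piece, since the kernel carries the weight $(t-s)^{-1}$ rather than the uniform weight in $s$ defining $\overline m_n$.
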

\begin{proof}
  We use the representation  \eqref{GSgradK} for
 the gradient  $\nabla_{x}{\mathbf {K}}$ and bound the integrals $(\nabla_{x}{\mathbf {K}})_{TT}$,
  $(\nabla_{x}{\mathbf {K}})_{TS}$ and $(\nabla_{x}{\mathbf {K}})_{SS}$.

  The simplest term to bound is the integral $(\nabla_{x}{\mathbf {K}})_{TS}$ whose kernel $a_{TS}$ satisfies (\cite{glassey1989large}, Lemma 4)
\begin{equation}
\label{eq:aTSbd}
  |\nabla_{p} a_{TS}| \leq C \lbrack p \rbrack^{4} .
\end{equation}
After using $Sf = -\div_{p}(Ff)$ to integrate by parts, we
  find
  \begin{equation}
  \label{eq:}
     \int a_{TS} Sf \, dp = \int (\nabla_{p}a_{TS}) Ff\,dp.
  \end{equation}
  The properties \eqref{Fbound} and \eqref{eq:aTSbd} then imply
  \begin{equation}
  \label{eq:}
    \begin{aligned}
      \left| \int a_{TS} Sf \, dp \right| &\leq C \int \lbrack p
     \rbrack^{4} |p| |{\mathbf{K}}| f \, dp \\
                                          &\leq C M_{5} \|
                                          {\mathbf {K}}(s)\|_{L^{\infty}}
    \end{aligned}
  \end{equation}
 with the fact $|p| \lbrack p \rbrack^{4} = |v| \lbrack p \rbrack^{5}$.
 Therefore, $(\nabla_{x}{\mathbf {K}})_{TS}$ has the bound
 \begin{equation}
 \label{eq:gradKTS}
   \begin{aligned}
     \left| \int_{|x - y| \leq t} a_{TS} Sf dp \frac{dy}{|x - y|^2}
     \right| &\leq CM_{5} \int_0^{t} \| {\mathbf {K}}(s)\|_{L^{\infty}} \, ds \\
             &\leq C M_{5}T K_{\infty}(t).
   \end{aligned}
 \end{equation}

 In order to bound $(\nabla_{x}{\mathbf {K}})_{SS}$, we first rewrite $S^2 f$
  appealing twice to the Vlasov equation
 $Sf = -\div_{p}(Ff)$.
 Pointwise,
\begin{equation}
\begin{aligned} 
  S(Sf) &= - S(\div_{p}(Ff)) \\
&=\na_x(Ff) : \nabla_{p} v -\div_{p} (S(Ff))  \\
&=\na_x(Ff) : \nabla_{p}v - \div_{p} (fSF) -\div_{p}(FSf)  \\
&=\na_x(Ff) : \nabla_{p}v - \div_{p} (fSF) +  \div_{p} (F\div_{p} (Ff)).
\end{aligned}
\la{s2ex}
\end{equation}
We thus have three terms entering the expression of
$(\nabla_{x}{\mathbf {K}})_{SS}$. For $n = 0,1,2$, the kernel $a_{SS}$  satisfies (\cite{glassey1989large}, Lemma 4)
\begin{equation}
  \label{eq:aSSgradpbd}
  |\nabla_{p}^{n} a_{SS}| \leq C \lbrack p \rbrack^{4} .
\end{equation}

For the last term, integrating by parts in $p$
twice gives
 \begin{equation}
\label{eq:}
\int a_{SS}\, \div_{p} (F\div_{p} (Ff)) \, dp = \int F \cdot \nabla_{p}(F
\cdot \nabla_{p} a_{SS}) f \, dp.
\end{equation}
Then, properties \eqref{Fbound}, \eqref{napFbound} and
\eqref{eq:aSSgradpbd} imply
\begin{equation}
\label{eq:}
\begin{aligned}
  \left| \int  a_{SS} \div_{p}(F \div_{p}(Ff))\,dp \right| &\leq C\int
  |p| \lbrack p \rbrack^{4} ( |\nabla_{p} F| +  |F| )|{\mathbf{K}}|
  f\, dp\\
                                                           &\leq C\int
|v|\lbrack p \rbrack^{5}  |{\mathbf{K}}|^2 f \, dp   \\
                                                           &\leq C M_{5}
                                                           \|
                                                           {\mathbf {K}}(s)\|_{L^{\infty}}^2.
\end{aligned}
\end{equation}
The bound for the last term is therefore
\begin{equation}
\label{eq:aSStermthree}
\begin{aligned}
\left|  \int_{|x - y| \leq t} a_{SS}\, \div_{p}(F \div_{p}(Ff))  dp
\frac{dy}{|x - y|}\right| & \\
\leq C M_{5} \int_0^{t} (t - s)& \|
{\mathbf {K}}(s)\|_{L^{\infty}}^2  ds  \\
                          &\leq CM_{5} (T K_{\infty}(t))^2
\end{aligned}
\end{equation}

For the second term, integration by parts in $p$ yields
 \begin{equation}
\label{eq:}
-\int a_{SS}\,\div_{p}(f SF) \, dp = \int (\nabla_{p} a_{SS}) (SF) f\, dp.
\end{equation}
From the Maxwell equations, 
\begin{equation}
\label{eq:SF}
\begin{aligned}
  SE &=  v \cdot \nabla_{x}E + \nabla_{x} \times B - j,\\ 
  SB &= v\cdot \nabla_{x}B - \nabla_{x} \times E ,
\end{aligned}
\end{equation}
and so from property \eqref{eq:aSSgradpbd}, noting that $S \chi = 0$,
\begin{equation}
\label{eq:}
  \begin{aligned}
    \left| \int a_{SS}\, \div_{p}(fSF)\,dp\right| &\leq C\int  |p| \lbrack p
    \rbrack^{4}  (|SE| + |SB|) f \, dp \\
                                               &\leq C M_{5}  (M_0 + \|
                                               \nabla_{x}{\mathbf {K}}(s)\|_{L^{\infty}}
                                              ).
  \end{aligned}
\end{equation}
The bound for the second term is then
\begin{equation}
\label{eq:aSStermtwo}
\begin{aligned}
  \left| \int_{|x - y| \leq t} a_{SS} \,\div_{p}(f SF) \,dp \frac{dy}{|x -
  y|}  \right|& \\ \leq CM_{5}\int_0^{t}(t - s&)(M_0 +  \|
\nabla_{x}{\mathbf {K}}(s)\|_{L^{\infty}})\,ds   \\
              &\leq CM_5 T \int_0^{t} ( M_0 + \| \nabla_{x}{\mathbf {K}}(s)\|_{L^{\infty}} ) \, ds
\end{aligned}
\end{equation}

For the first term, to integrate by parts the quantity
\begin{equation}
\label{eq:}
\nabla_{x}(Ff) : \nabla_{p}v = \partial_{i}(F_{j} f)\frac{\partial v_{i}}{\partial p_{j}}
\end{equation}
where $\partial_{i} = \partial / \partial x_{i}$,
we recall the decomposition of derivatives
\begin{equation}
\label{eq:}
\partial_{i} = T_{i} + \left(\frac{\omega_{i}}{1 + v \cdot
\omega}\right) ( v \cdot
T - S ). 
\end{equation}
Repeated indices indicate summation. We then write
\begin{equation}
\label{eq:}
a_{SS}  (\nabla_{x} (F f) : \nabla_{p}v ) = A^{ij} T_{i} (F_{j} f)
+ b^{j} S(F_{j}f)
\end{equation}
as the sum of two expressions. 

The latter expression is
\begin{equation}
\label{eq:}
b^{j} S(F_{j}f) = 
  \frac{\partial v_{i}}{\partial p_{j}}
  \left(\frac{\omega_{i}}{1 + v \cdot \omega}\right) a_{SS} S(F_{j}f)
\end{equation}
which becomes
\begin{equation}
\label{eq:}
b^{j} S(F_{j}f) = b^{j}F_{j}Sf + b^{j}f SF_{j}.
\end{equation}
Observe that each term on the right hand side above may be treated in a
similar fashion to terms previously discussed; we use the Vlasov equation to integrate by parts in $p$ and use property
\eqref{Fbound} to deduce
\begin{equation}
\label{eq:bjbdone}
  \left| \int_{|x - y| \leq t} b^{j}F_{j}Sf \, dp \frac{dy}{|x - y|}
  \right| \leq CM_{6} T^2 K_{\infty}(t), 
\end{equation}
and we use properties \eqref{napFbound} and \eqref{eq:SF} to arrive at 
\begin{equation}
\label{eq:bjbdtwo}
\left| \int_{|x - y| \leq t} b^{j} fSF_{j}\, dp \frac{dy}{|x - y|}
\right|   \leq CM_{6} T \int_0^{t} (M_0 + \|
\nabla_{x}{\mathbf {K}}(s)\|_{L^{\infty}})\, ds
\end{equation}

The former expression is
\begin{equation}
\label{eq:}
A^{ij} T_{i}(F_{j}f) = \frac{\partial v_{i}}{\partial p_{j}} \left(
T_{i} +  \frac{\omega_{i}}{1 + v \cdot \omega}v \cdot T\right)(F_{j} f).
\end{equation}
Each  $T_{i}$ is a total  $y$ derivative, and so integrating by parts in
$y$ gives 
 \begin{equation}
\label{eq:}
\int_{|x - y|\leq t} A^{ij} T_{i}(F_{j}f) \, dp \frac{dy}{|x - y|} = -
\int_{|x - y|\leq t} \tilde{A}^{j} (F_{j}f) \, dp \frac{dy}{|x -
y|^2} + O(1)
\end{equation}
where $O(1)$ represents a function of  $(x,t)$ which depends explicitly
on the initial data. On the right hand side is the kernel  $\tilde{A}^{j} = r^2\partial  / \partial
y_{i}(A^{ij} / r)$ where $r = |x - y|$,  which in particular satisfies
$|\tilde{A}^{j}| \leq C \lbrack p \rbrack^{4} $ (see
\cite{glassey1989large} Lemma 4). The estimate for this expression is
then by property \eqref{Fbound}
 \begin{equation}
\label{eq:Aijbd}
\begin{aligned}
\left|\int_{|x - y|\leq t} A^{ij} T_{i}(F_{j}f) \, dp \frac{dy}{|x -
y|} \right| \leq &C\int_{|x - y| \leq t} |v| \lbrack p \rbrack^{5} |{\mathbf{K}}| dp
\frac{dy}{|x - y|^2} + O(1) \\
\leq & C M_{5}\int_0^{t} \| {\mathbf {K}}(s)\|_{L^{\infty}} \, ds + O(1)\\
\leq & C_0 ( 1 + M_{5}TK_{\infty}(t))
\end{aligned}
\end{equation}
where $C_0$ depends only on the initial data. Taking together
\eqref{eq:Aijbd}, \eqref{eq:bjbdone} and \eqref{eq:bjbdtwo} gives us a
bound on the first term entering the expression of $(\nabla_{x}{\mathbf {K}})_{SS}$, while the second and last term have bounds
\eqref{eq:aSStermtwo} and
\eqref{eq:aSStermthree}.

Therefore, $(\nabla_{x} {\mathbf {K}})_{SS}$ has the bound
\begin{equation}
\label{eq:gradKSS}
\left| \int_{|x - y| \leq t} a_{SS} (S^2 f) \,dp
\frac{dy}{|x - y|}\right| \leq C_T \left( 1  + K_{\infty}(t)^2 +
\int_0^{t}\| \nabla_{x}{\mathbf {K}}(s) \|_{L^{\infty}} ds \right) 
\end{equation}
where $C_{T}$ depends only on the initial data and $T$.

To bound $(\nabla_{x}{\mathbf {K}})_{TT}$, we write the integral as
\begin{equation}
\label{eq:}
(\nabla_{x}{\mathbf {K}})_{TT}(x,t) = \int_0^{t}\frac{ds}{t - s}\int_{|\omega| = 1}  a_{TT}(\omega,v) f(x + ( t- s
)\omega, p, s) dp\, dS(\omega)
\end{equation}
We split the integral on the backwards light cone  into two pieces: the base piece on
$0 \leq s \leq t - \delta$, and tip piece on $t - \delta \leq s \leq
t$, where  $\delta$ is chosen below.
The properties of the kernel $a_{TT}$ (\cite{glassey1989large}, Lemma 4), 
\begin{equation}
  |a_{TT}| \leq C \lbrack p \rbrack^{3}
\end{equation}
and
\begin{equation}
  \label{eq:aTTbd}
  \int_{|\omega| = 1} a(v,\omega) dS(\omega) = 0,
\end{equation}
 imply for the base piece
\begin{equation}
\label{eq:}
\left|\int_0^{t- \delta} \frac{ds}{t - s} \int_{|\omega| = 1} 
a_{TT} f \, dp \, dS(\omega) \right| \leq C(M_{4} + \log  G_2(t))\log \left(
\frac{t}{\delta} \right).
\end{equation}
For the tip piece, we first note
\begin{equation}
\label{eq:}
\begin{aligned}
  \left| \int_{|p| \geq (t -s)^{-\kappa}} a_{TT} f \, dp \right| &\leq C
\int_{|p| \geq (t - s)^{-\kappa}} \lbrack p \rbrack^{3} f \, dp  \\
                                                            &\leq C (t -
                                                            s)^{\alpha}
                                                            \int_{|p|
                                                            \geq (t
                                                          -s)^{-\kappa}}
                                                          |p|^{\frac{\alpha}{\kappa}}
                                                          \lbrack p
                                                          \rbrack^{3} f
                                                          \, dp \\
                                                            &\leq
                                                            CM_{n}(t -
                                                            s)^{\alpha}
\end{aligned}
\end{equation}
where $n = \left\lceil  4 + \alpha / \kappa  \right\rceil  $,  and $\alpha, \kappa$ are
numbers chosen freely. We let $\alpha > 0$ so that
\begin{equation}
\label{eq:}
\left| \int^{t}_{t - \delta} \frac{ds}{t - s}\int_{|\omega| = 1} \int_{|p|
\geq (t - s)^{-\kappa}} a_{TT} f \, dp \, dS(\omega)  \right| \leq C
M_{n} \delta^{1 - \alpha }.
\end{equation}
Then, we choose $\kappa < \frac{1}{6}$ such that
\begin{equation}
\label{eq:gradKTTbd}
  \begin{aligned}
\left| \int^{t}_{t - \delta} \frac{ds}{t - s}\int_{|\omega| = 1} \int_{|p|
\leq (t - s)^{-\kappa}} a_{TT} f \, dp \, dS(\omega)  \right| &\\
\leq \sup_{s \leq t} \sup_{x,p} |\nabla_{x} f (x,p,s)| \int_{t -
\delta}^{t} &\int_{| p| \leq (t -s)^{-\kappa}} \lbrack p \rbrack^{3} \, dp \\
\leq  \, \delta^{1 - 6\kappa} \sup_{s \leq t}& \sup_{x,p} |\nabla_{x} f (x,p,s)|.
  \end{aligned}
\end{equation}
With $\alpha = 1$, we choose here $\delta = t (2 + \sup_{s\leq t,x,p} |\nabla_{x} f (x,p,s)|)^{-1 /{(1 -
6\kappa)}}$ in view of the above.

Therefore, we
have the following bound for $(\nabla_{x}{\mathbf {K}})_{TT}$
\begin{equation}
\label{eq:gradKTT}
\left| \int_0^{t}\frac{ds}{t - s} \int_{|\omega| = 1}  a_{TT} f \,
dp \, dS(\omega)  \right| \leq C_{T} \log  G_1(t)
\log  G_2(t).
\end{equation}

Putting together estimates \eqref{eq:gradKTS}, \eqref{eq:gradKSS} and
\eqref{eq:gradKTT}, we obtain
\begin{equation}
\label{eq:}
\begin{aligned}
|\nabla_{x}{\mathbf {K}}(x,t)|\leq C_{T} \left( \log  G_1(t) \log  G_2(t)
+ \int_0^{t} \| \nabla_x {\mathbf {K}}(s)\|_{L^{\infty}}\, ds\right) 
\end{aligned}
\end{equation}
where we chose to bound $K_{\infty}^2$ by the product $C \log  G_1
\log  G_2$ in view of Theorem \ref{Qnbound}. Using
the Gr{\"o}nwall inequality, we conclude the proof.
\end{proof}

\section{Proof of Theorem \ref{main}}\la{gfb}

\begin{thm}\la{naxfbound} Let $(f, E, B)$ be a smooth solution of the RVM equations  on $[0,T]$. Assume that the initial data $f_0(x,p)$  obeys $f_0(x,0) =0$  and the decay condition \eqref{iddecay}. 
 Then
\begin{equation*}
\|\na_x f (\cdot, t)\|_{L^{\infty}}  + \|\lbrack p \rbrack|\na_p f(\cdot,t )|\|_{L^{\infty}}\leq C\exp(C \exp (C t))
\la{naxnaxfbound}
\end{equation*}
holds for $t\le T$ with a constant $C$ depending continuously and explicitly only on initial data.
\end{thm}

\begin{proof}
We consider the quantities
 \begin{equation}
W(t) = \sup_{s\le t} \|\nabla_x f(s)\|_{L^{\infty}} + 3
 \la{Xtdef0}
 \end{equation}
 and
 \begin{equation}
 Z(t) = \sup _{s\le t} \||p| |\nabla_p f(s)| + (1 + |p|) f(s)\|_{L^{\infty}} + 3.
 \la{Ytdef0}
 \end{equation}
 Below we show $W$ and $Z$ obey the certain differential inequalities. We write \eqref{rmv} as
\begin{equation}
D_t f = -(\div_p F)f
\la{Dtf}
\end{equation}
and take derivatives in $x$ and in $p$:
\begin{equation}
D_t (\pa_{x_i} f)  = - (\pa_{x_i} F)\cdot\na_p f  -(\div_p F)(\pa_{x_i}f)   - (\pa_{x_i}(\div_p F)) f
\la{dtnaxf}
\end{equation}
and
\begin{equation}
D_t (\pa_{p_i} f)  =- (\pa_{p_i} v)\cdot\na_x f  -(\div_p F)(\pa_{p_i}f) - (\pa_{p_i} F)\cdot\na_p f   - (\pa_{p_i}(\div_p F)) f
\la{dtnapf}
\end{equation}
We deduce inequalities for quantities
\begin{equation}
w = |\nax f| + 3
\la{g1def}
\end{equation}
and
\begin{equation}
z = (1+|p|)f + |p||\na_p f| + 3.
\la{g2def}
\end{equation}
Using the estimates \eqref{Fbound}, \eqref{napFbound}, \eqref{naxFbound} and
\eqref{napnaxFbound}, we find that
\begin{equation}
D_t w \le C(K w + (K + |\nax {\mathbf {K}}|)z)
\la{dtg1}
\end{equation}
and 
\begin{equation}
D_t z \le C(w + K z)
\la{dtg2}
\end{equation}
from equations \eqref{dtnaxf} and \eqref{dtnapf}.

To see this, first multiply the equation \eqref{dtnaxf} by $\pa_{x_i} f / |\nax f|$ and add in $i$ to obtain,
\begin{equation}
\begin{aligned}
D_t|\nax f| 
 &\le  |\div_p F|| \nax f| + |\na_x F|  |\na_p f | + |\na_x\div_p F| f\\
 &\le  C(K|\nax f|  +  (K + |\na_x {\mathbf {K}}|)(|p||\na_p f| +f))\\
 &\le C(K w + (K + |\nax {\mathbf {K}}|)z).
 \end{aligned} 
\la{gnx}
\end{equation}
This implies \eqref{dtg1}. Then, we multiply \eqref{dtnapf} by $|p|\pa_{p_i} f / |\na_p f|$ and add in $i$ to obtain
 \begin{equation}
\begin{aligned}
|p|D_t|\na_p f| 
 &\le 2|\nax f| + |p| |\div_p F| |\na_p f|  + |p||\na_p\div_p F| f\\
 &\le  2|\nax f| + C(K|p||\na_p f| + K |p| f)\\
 &\le C(w + K z).
 \end{aligned} 
\la{gnp}
\end{equation}
We used $|p| |\na_p v | < 2$, which is immediate from \eqref{vik} and $|v| < 1$. This implies,  with \eqref{Dtf}, the estimate \eqref{dtg2}.
Now we have that
 \begin{equation}
W(t) = \sup_{s\le t} \|w(s)\|_{L^{\infty}},
 \la{Xtdef}
 \end{equation}
 and
 \begin{equation}
Z(t) = \sup _{s\le t} \|z(s)\|_{L^{\infty}}.
 \la{Ytdef}
 \end{equation}
Taking the supremum in time of \eqref{dtg1} and \eqref{dtg2}, we find
\begin{equation}
\begin{aligned}
&\sup_{s \leq t}\| D_t w(s)\|_{L^\infty} \\
&\leq C\left(K_\infty(t) W(t) + \left(K_\infty(t) + \sup_{s \leq t}\|\nax {\mathbf {K}}(s)\|_{L^{\infty}}\right)Z(t)\right)
\end{aligned}
\la{dtX}
\end{equation}
and
\begin{equation}
\sup_{s \leq t}\|D_t z(s) \|_{L^\infty} \le C(K_\infty(t) Z(t) + W(t))
\la{dtY}
\end{equation} 
We use now

\begin{lemma}
\la{diffsup}
    Let $g = g(t)$ be a positive Lipschitz function of $t\in [0, T]$ and let $G(t) =\sup_{s\le t} g(s)$. Then, $G = G(t)$ is Lipschitz and
 \[
 \lim\sup_{h\to 0}\fr{G(t+h)-G(t)}{h} \le \lim\inf_{\varepsilon\to 0}\sup_{s\le t+ \varepsilon}|g'(s)|. 
 \]
\end{lemma}

By Lemma \ref{diffsup}, differentiation under $\sup_{s\le t}$ for Lipschitz functions of time is permissible.  
We have thus
\begin{equation}
\fr{dW}{dt} \le \lim\inf_{\varepsilon\to 0}\sup_{s\le t+\varepsilon} \|D_t w(s)\|_{L^{\infty}}
\la{dxtine}
\end{equation}
and
\begin{equation}
 \fr{dZ}{dt} \le \lim\inf_{\varepsilon\to 0}\sup_{s\le t+\varepsilon} \|D_t z(s)\|_{L^{\infty}}
 \la{dytine}
 \end{equation}
 holds for almost all $t$. Then, using Theorem \ref{Kbound} and Theorem \ref{thm:naxKbound} and the continuity of the upper bounds  allowing to set $\varepsilon=0$, we arrive at the ODE system
 \begin{equation}
\fr{dW}{dt} \le C((\log  W) W + (\log W)(\log Z) Z)
\la{G1eq}
\end{equation}
and
\begin{equation}
\fr{dZ}{dt} \le C((\log Z )Z  + W).
\la{G2eq}
\end{equation}
 We apply Lemma \ref{logsys}:
\begin{lemma}
\label{logsys}
Let $W = W(t)$ and $Z = Z(t)$ be nondecreasing, Lipschitz functions of $t\ge 0$. Let $W(0) =W_0$ and $Z(0) = Z_0$ and suppose
$$
\min{\{ \log W_0, \log Z_0 \}} \geq 1.
$$
 Assume that $W(t)$ and $Z(t)$ obey differential inequalities
\begin{equation*}
\fr{dW}{dt} \le C((\log  W) W + (\log W)(\log Z) Z)
\la{G1eqa}
\end{equation*}
and
\begin{equation*}
\fr{dZ}{dt} \le C((\log Z )Z  + W).
\la{G2eqa}
\end{equation*}
Then the functions $W$ and $Z$ satisfy
\begin{equation*}
W + Z \leq C \exp( C \exp (C t))
\end{equation*}
where $C$ depends only on $W_0$ and $Z_0$.
\end{lemma}
 \begin{rem}
    In contrast, the ODE
$$\fr{dY}{dt}= Y (\log Y)^2 $$
blows up in finite time.
\end{rem} 
\end{proof}
The proof of Theorem \ref{main} is completed now by applying the bounds of Theorem \ref{naxfbound} to the bounds on the EM fields in Theorems \ref{Kbound} and \ref{thm:naxKbound}.


\section*{Appendix A: Checking the nonlinear Glassey-Strauss representation}
Here we derive \eqref{KS} and \eqref{KT} and check the properties \eqref{eq:aSbd} and \eqref{eq:aTbd}.
The expressions for $E$ coming from  $S$, $E_S$ are 
 \cite{glassey1986singularity} p.63,
\begin{equation}
\begin{aligned}
&(E_S)_i\\
 &=  - \int dp\int_0^t \int_{|\omega| =1} \left(\fr{\omega_i + v_i}{1+ (\omega\cdot v)}\right) (Sf)(x- r\omega, p, t-r)rdrdS(\omega)
\end{aligned}
\la{ESGS}
\end{equation}
where $\omega = \widehat{y-x}$. Using  the equation \eqref{rmv}, 
denoting 
\begin{equation}
N(y, p, s) = F(y,p,s)f(y,p,s),
\la{N}
\end{equation}
and integrating by parts in \eqref{ESGS} we obtain
\begin{equation}
\begin{aligned}
&(E_S)_i \\ &= - \int dp\int_0^t \int_{|\omega| =1} \pa_{p_j}\left(\fr{\omega_i + v_i}{1+ (\omega\cdot v)}\right) N_j(x- r\omega, p, t-r)rdrdS(\omega)
\end{aligned}
\la{EsGS}
\end{equation}
which we write as
\begin{equation}
\begin{aligned}
&(E_S)_i\\ &= - \int dp \int_0^t \fr{1}{t-s}\int_{|x-y| = t-s} N(y,s)\cdot \na_p\left (\fr{\omega_i + v_i}{1+ (\omega\cdot v)}\right)dS(y)ds
\end{aligned}
\la{ES}
\end{equation}
The expressions \eqref{ES} for $E_S$  are nonlinear because they employ \eqref{rmv}.  The expression for $E_T$  \cite{glassey1986singularity} p. 63 is
\begin{equation}
\begin{aligned}
&(E_T)_i \\&= -\int dp \int_0^t\fr{1}{(t-s)^2}\int_{|x-y| = t-s} f(y,s) \fr{1}{\lbrack p \rbrack^2}\left(\fr{\omega_i + v_i}{(1+ (\omega\cdot v))^2}\right)dS(y)ds
\end{aligned}
\la{ET}
 \end{equation}
Note that $E_T$ is linear in $f$, because it comes without use of the equation of evolution of $f$.
There are analogous representations  for $B$. The main point here is to verify \eqref{eq:aSbd}
 and \eqref{eq:aTbd}.
We observe that
\begin{equation}
\pa_{p_i}v_k = \fr{1}{\sqrt{1+|p|^2}}\left (\delta_{ik} - v_iv_k\right) = \lbrack p \rbrack^{-1}(\mathbb I - v\otimes v)_{ik}
\la{vik}
\end{equation}
and
\begin{equation}
|v|^2 =  1- \fr{1}{\lbrack p \rbrack^2}.
\la{oneminus}
\end{equation}
We note the following facts. First,
\begin{equation}
\pa_{p_j}\left( \fr{1}{1 + \omega\cdot v}\right) = \fr{v_j}{\lbrack p \rbrack(1 + \omega\cdot v)} - \fr{\omega_j + v_j}{\lbrack p \rbrack(1+ \omega\cdot v)^2}
\la{divsymbo}
\end{equation}
and
\begin{equation}
\pa_{p_j} \left(\fr{\omega_i + v_i}{1+ (\omega\cdot v)}\right) = \fr{1}{\lbrack p \rbrack}\fr{\left (\delta_{ij} + v_j\omega_i\right)}{1 + (\omega\cdot v)} 
-\fr{1}{\lbrack p \rbrack} \fr{(\omega_i +v_i)(\omega_j + v_j)}{(1 + (\omega\cdot v))^2}.
\la{divsymb}
\end{equation}
These are done by direct calculation, inserting $\omega + v$ terms. The second observation is that
\begin{equation}
\fr{|\omega + v|^2}{ (1+ (\omega\cdot v))^2} = \fr{(1-|v|)^2 + 2|v|\delta}{(1-|v|)^2 + |v|^2\delta^2 + 2(1-|v|)|v|\delta}
\la{symbb}
\end{equation}
where
\begin{equation}
\delta = 1 + \omega \cdot \widehat {p} = 1 + \cos \theta.
\la{delta}
\end{equation}

Multiplying the numerator by $1-|v|$ and using $(1-|v|)^3 \le (1-|v|)^2$ in the numerator we see that the resulting fraction is less than $1$, and therefore, after taking square roots we have,
\begin{equation}
\fr{|\omega + v|}{1 +(\omega\cdot v)} \le \sqrt{2} \lbrack p \rbrack
\la{symbineq}
\end{equation}
where we used 
\begin{equation}
(1-|v|)^{-1} = \lbrack p \rbrack^2(1+|v|)\le 2 \lbrack p \rbrack^2.
\la{vsmall}
\end{equation}
Also, from  $1 + (\omega\cdot v) = 1 + |v|\cos\theta \ge 1-|v|$ and \eqref{vsmall} we have that
\begin{equation}
0\le \fr{1}{1 + \omega\cdot v}\le 2\lbrack p \rbrack^2.
\la{symblow}
\end{equation}
Thus, the second term in \eqref{divsymb} obeys
\begin{equation}
\left |\fr{1}{\lbrack p \rbrack} \fr{(\omega_i +v_i)(\omega_j + v_j)}{(1 + (\omega\cdot v))^2}\right| \le 2\lbrack p \rbrack
\la{secondb}
\end{equation}
and the first term in \eqref{divsymb} is bounded in view of \eqref{symblow} by $4\lbrack p \rbrack$. This implies 
\begin{equation}
\left |\pa_{p_j}\left(\fr{(\omega_i + v_i)}{1 + (\omega\cdot v)}\right)\right | \le 6\lbrack p \rbrack.
\la{divsymbb}
\end{equation}
 Note also that
\begin{equation}
\left |\fr{1}{\lbrack p \rbrack^2}\left(\fr{\omega_i + v_i}{(1+ (\omega\cdot v))^2}\right)\right| \le 2\sqrt{2} \lbrack p \rbrack.
\end{equation}
We verified thus the bounds \eqref{eq:aSbd}  and \eqref{eq:aTbd} in the representation of the electric field.
After use of the equation \eqref{rmv} and integration by parts, the magnetic field representation \cite{glassey1986singularity} p.63, yields
\begin{equation}
B_S  = \int dp \int_0^t \fr{1}{t-s}\int_{|x-y| = t-s} N(y,s)\cdot \na_p\left (\fr{\omega\times v}{1+ (\omega\cdot v)}\right)dS(y)ds
\la{BS}
\end{equation}
From \eqref{divsymbo} and the inequalities \eqref{symbineq} and \eqref{symblow} and   because $ |\omega \times v| \le |\omega+v|$  we have
\begin{equation}
\left |\na_p \left (\fr{\omega\times v}{1+ (\omega\cdot v)}\right)\right | \le 10\lbrack p \rbrack 
\la{sigBb}
\end{equation}
 Finally, the representation of $B_T$ from \cite{glassey1986singularity} is
 \begin{equation}
B_T  = \int dp \int_0^t \fr{1}{(t-s)^2}\int_{|x-y| = t-s} \left (\fr{\omega\times v}{\lbrack p \rbrack^2(1+ (\omega\cdot v))^2}\right)f(y,s) dS(y)ds
\la{BT}
\end{equation}
 and we have
 \begin{equation}
 \left |\fr{\omega\times v}{\lbrack p \rbrack^2(1+ (\omega\cdot v))^2}\right | \le 2\sqrt 2 \lbrack p \rbrack,
 \la{tauBb}
 \end{equation}
 concluding the verification of the inequalities \eqref{eq:aSbd} and \eqref{eq:aTbd}.

\section*{Appendix B: ODE Lemmas}

We prove here Lemma \ref{diffsup} and
Lemma \ref{logsys}.
\begin{proof}[Proof of Lemma \ref{diffsup}] 
If $G(t) = g(s)$ with $s<t$, then $g(s') \le g(s)$ for all $s\le s' \le t$ (otherwise, $G(t)$ would have been attained at $s'$ not at $s$) and therefore $G(s') = g(s)$ for $s'\in [s,t]$ and the left derivative of $G'(t-0)$  of $G$ at $t$ vanishes. If $g(t) <G(t)$ then $G(s)= G(t)$ for a small interval of $s>t$ and so $G'(t)=0$.

If $g(t) = G(t)$ then for any $\varepsilon>0$ we have 
\begin{equation}
g(s) - g(t) \le (s-t)L_{\varepsilon}
\end{equation}
for all $t<s\le t+\varepsilon$, where $L_{\varepsilon} = \sup_{t\le s \le t+\varepsilon}|g'(s)|$. We take $0<h<\varepsilon$, write $g(s) \le g(t) + hL_{\varepsilon}$ for $s\le t+h$ and take the supremum in $s$ to deduce 
\begin{equation}
G(t+h) \le G(t) + hL_{\varepsilon}.
\end{equation}
Thus $G'(t+0) \le L_{\varepsilon}$.  Because $\varepsilon>0$ is arbitrary, we have $$G'(t+0) \le \lim\inf _{\varepsilon\to 0}L_{\varepsilon}.$$ Finally, if $G(t) = g(t)$ and $g(s)< G(t)$ for all $s<t$ then 
\begin{equation}
G(s_0) \le G(t) + \sup_{s'\le t}|g'(s')| (t-s_0)
\end{equation}
holds by taking supremum of 
\begin{equation}
g(s) \le g(t) + \sup_{s'\le t}|g'(s')|(t-s)
\end{equation}
for $s\le s_0<t$. This concludes the argument.
\end{proof}


 \begin{proof}[Proof of Lemma \ref{logsys}]
Consider $\widetilde{Z} = Z\log Z$. The differential inequality for $W$ then reads
\begin{equation}
\fr{dW}{dt}  \le C((\log W) W  + (\log W) \widetilde{Z})
\la{G1Zeq}
\end{equation}
and from the differential inequality for $Z$ we have
\begin{equation}
\fr{d\widetilde{Z}}{dt}  \le C(\log Z +1)(W + \widetilde{Z}).
\la{Zeq}
\end{equation}
Now take $\overline{W} = W + \widetilde{Z}$. Because $\widetilde{Z}\ge Z$ we have $\log Z\le \log \widetilde{Z} \le \log \overline{W}$. We also have
$\log W\le \log \overline{W}$, so we obtain
\begin{equation}
\fr{d \overline{W}}{dt} \le C(\log \overline{W}+1)\overline{W}
\la{Weq}
\end{equation}
and thus $\overline{W}$ is bounded by a double exponential of time.
 \end{proof}  

\noindent{\bf{Data Availability.}} No datasets were generated or analyzed in this paper.

\noindent{\bf{Conflict of Interest.}} The authors declare that they have no conflict of interest.

\noindent{\bf{Acknowledgments.}} The authors thank J. Burby, G. W. Hammett, M. W. Kunz and A. Spitkovsky for helpful discussions. The work of PC was partially supported by NSF grant DMS-2106528 and by
a Simons Collaboration Grant 601960. The work of HG was supported by NSF grant DGE-2039656 and by the Ford Foundation.

\bibliographystyle{plainnat}
\bibliography{local}

\end{document}